\newcommand{\mc}{\mathrm{i}}
\newcommand{\be}{\begin{equation}}
\newcommand{\ee}{\end{equation}}
\newcommand{\ba}{\begin{array}}
\newcommand{\ea}{\end{array}}
\newcommand{\comment}[1]{}
\newcommand{\qedwhite}{\hfill \ensuremath{\Box}}
\pgfplotsset{compat=1.16}
\newcommand{\B}[1]{\mbox{\boldmath $#1$}}
\newcommand\smallO{
  \mathchoice
    {{\scriptstyle\mathcal{O}}}
    {{\scriptstyle\mathcal{O}}}
    {{\scriptscriptstyle\mathcal{O}}}
    {\scalebox{.7}{$\scriptscriptstyle\mathcal{O}$}}
  }
\begin{document}

\title{Computing the Action of the Generating Function of Bernoulli Polynomials on a  Matrix with An Application to Non-local Boundary Value Problems}
\titlerunning{Computing the Action of the Matrix  Generating Function of Bernoulli Polynomials}       

\author{Lidia Aceto \and  Luca Gemignani}

\institute{L. Aceto \at Dipartimento di Scienze e Innovazione Tecnologica\\
  Universit\`a del Piemonte Orientale \\
              \email{lidia.aceto@uniupo.it}   \\
            https://orcid.org/0000-0002-4537-2444
           \and
           L. Gemignani \at Dipartimento di Informatica\\
  Universit\`a di Pisa \\
              \email{luca.gemignani@unipi.it} \\
        https://orcid.org/0000-0001-8000-4906
}

\date{Received: date / Accepted: date}

\maketitle
\begin{abstract}
This paper deals with efficient numerical methods for computing the action of the generating function of Bernoulli polynomials, say $q(\tau,w)$, on a typically large sparse matrix. This problem occurs when solving some non-local boundary value problems. Methods based on the Fourier expansion of $q(\tau,w)$ have already been addressed in the scientific literature. The contribution of this paper is twofold. First, we place these methods in the classical framework of Krylov-Lanczos (polynomial-rational) techniques for accelerating Fourier series. This allows us to apply the convergence results developed in this context to our function. Second, we design a new acceleration scheme. Some numerical results are presented to show the effectiveness of the proposed algorithms.

\keywords{Rational approximation of Fourier series \and Convergence acceleration \and Matrix functions \and Non-local boundary value problems}
\subclass{15A60 \and 65F60}
\end{abstract}

\section{Introduction}
In this work we are interested in the approximation of the function
\begin{equation} \label{eq:init}
 q(\tau,w)= \frac{w e^{w\tau}}{e^w -1}, \qquad w \in \mathbb{C} \setminus \{0, \pm 2\pi \mc, \pm 4 \pi \mc, \ldots\},
\end{equation}
which acts on a typically large sparse matrix.
As known, see e.g. \cite{Abramowitz}, this function is the generating function of the Bernoulli polynomials $B_k(\tau),$ i.e.
\begin{equation}\label{formalseries}
q(\tau,w) = \sum_{k=0}^{+\infty }B_k(\tau) \frac{w^k}{k!},
\end{equation}
with this formal series which converges uniformly only  within  $|w|<2\pi$. 
Bernoulli polynomials  are very important in various fields of mathematics and, in particular, are intimately connected to differential equations. For example, recall that in a finite-dimensional setting in \cite{Boito1} it is proved that for a given matrix $A\in \mathbb R^{s\times s}$ and a given vector $\B {f} \in \mathbb{ R } ^s,$ the non-local boundary value problem (BVP)
\begin{eqnarray}
    &&\odv{\mathbf{u}}{\tau} = A \mathbf{u}, \quad 0<\tau<1,   \label{l1} \\
  &&\displaystyle \int_0^1 \mathbf{u}(\tau) \,\mathrm{d}\tau = \B {f}, \label{l2}
\end{eqnarray}
admits as unique solution
\begin{equation}\label{l3}
 \mathbf{u}(\tau)= q(\tau,A) \B {f}. 
 \end{equation}
More general differential problems can  also be reduced into the form \eqref{l1}, \eqref{l2} by the application of semidiscretization-in-space schemes.

BVPs  with special  non-local  boundary conditions are ubiquitous in applied sciences and engineering.  Integral boundary conditions are encountered in various applications such as population
dynamics, blood flow models, chemical engineering and
cellular systems (see e.g. \cite{Bou,Mar}  and the references given therein).  Interestingly, such problems  are also relevant in control theory for system identification, where  the analysis goes back to the work of Bellmann \cite{Bell}. 

It should be noted that functional approximations of \eqref{eq:init} based on \eqref{formalseries} are unreliable due to the restrictions on the radius of convergence. Recently, this drawback has been overcome by the functional method described in \cite{Boito1,Boito}. By applying an ad hoc technique for the convergence acceleration of a Fourier series, the following rational-trigonometric modification of \eqref{formalseries} was obtained in \cite{Boito} for $n\ge 0$
\begin{equation}\label{lexp}
 q(\tau,2 \pi z)= \sum_{k=0}^{2n+1} B_{k} (\tau) \frac{(2\pi z)^{k}}{k!} + (-1)^{n} 2 z^{2n+2}\sum_{k=1}^{+\infty} \frac{ \cos(2\pi k \tau) + \displaystyle\frac{z}{k} \sin(2\pi k \tau)}{k^{2n} (z^2+k^2)}.
 \end{equation}
Hence, by truncating the series on the right-hand side to a finite number of terms, say $N,$ 
in the same paper it was proposed to consider the family of approximations given by
\begin{equation}\label{lexpN}
\!\!\! g_{n,N}(\tau,z):= \sum_{k=0}^{2n+1} B_{k} (\tau) \frac{(2\pi z)^{k}}{k!} + (-1)^{n} 2 z^{2n+2}\sum_{k=1}^N \frac{ \cos(2\pi k \tau) + \displaystyle\frac{z}{k} \sin(2\pi k \tau)}{k^{2n} (z^2+k^2)}.
\end{equation}
While Arnoldi-type methods can also be used to evaluate the matrix function $q(\tau,A)$, this functional approximation method offers the following key advantages over the usual Arnoldi reductions:
\begin{itemize}
\item[$\bullet$] The functional approximation has the form of a series expansion in the time coordinate with coefficients that only depend on $A$ and $\B f$. In this way, these coefficients can be precomputed once and reused for the tabulation of the function $\mathbf{u}(\tau)$ over any grid of points. 
\item[$\bullet$] Although the function \eqref{eq:init} is meromorphic, the considered functional approximation exploits the pole structure of this function by providing the holomorphic extension of
\eqref{eq:init} around the removable singularity at the origin of the complex plane. This approach proves to be very suitable for dealing with matrices whose eigenvalues are clustered near the origin, as a numerical example in Section~\ref{numexp} will show.
\item[$\bullet$] The method can easily be extended to handle the case where $A$ is a linear unbounded closed operator between two Banach spaces \cite{Boito3}. This allows the method to be adapted to the treatment of certain parabolic differential problems, which can be recast into the form \eqref{l1}-\eqref{l2}.
\end{itemize}

Although it has also been shown that the series on the right side of \eqref{lexp}  converges uniformly to $q(\tau, 2\pi z)$  on every set  $\mathcal K_1\times \mathcal K_2,$  with   $\mathcal K_1\subset (0, 1)$ and $\mathcal K_2\subset \mathbb C\setminus \pm   \mc \mathbb N$  compact sets, both the residual estimates and the exact evaluations of the  asymptotic error constants are still missing.
Furthermore, although increasing the value of $n$ can bring great benefits to improve the convergence rate of \eqref{lexp}, computing the polynomial term of $g_{n, N}(\tau, A)$ can be prone to numerical instabilities whenever $A$ has eigenvalues of large magnitude \cite{Boito3}.

This work aims to fill these gaps.  The starting point is the derivation of the formula \eqref{lexp} in a more general framework for the development of acceleration methods for ordinary Fourier series, which in \cite{Pogh} is called the Krylov-Lanczos approximation, since this approach was originally proposed by Krylov \cite{Krylov} and Lanczos \cite{Lanczos}.
The term Krylov-Lanczos approximation denotes a class of methods for accelerating trigonometric series of a non-periodic function by polynomial corrections that represent discontinuities of the function and some of its first derivatives.
A classical approach among these methods is the Lanczos representation of functions in terms of Bernoulli polynomials and trigonometric series \cite{Lanczos,Lyn}. It is shown that this approach, applied to the periodic extension of $q(\tau, w)$, leads to the expansion \eqref{lexp}. The result allows us to adapt the techniques and tools developed in the general framework for the analysis of the function $q(\tau,w)$. Specifically, we derive estimates of the residual error in $L^2$-norm and for pointwise convergence  in the regions 
$\mathcal K_1$
far from the discontinuities at the endpoints.  Furthermore,  we develop an alternative acceleration  strategy  for $\tau\in \mathcal K_1$
which replaces  the polynomial term in \eqref{lexp} with a  suitable rational function  expressed  in a convenient form as  sum of ratios.

The paper is organized as follows.  In Section \ref{acc} we derive the expansion formula \eqref{lexp} in the framework of Lanczos representation.  In Section \ref{err} we adjust known error estimates for  the Lanczos approximation to the function $q(\tau, w)$ by devising  a  suitable scheme for evaluating the pointwise convergence  in regions away from the discontinuities.   In Section \ref{accel} we show that this scheme  indeeed  yields an effective acceleration scheme involving only the manipulation of rational functions. In Section \ref{numexp} we illustrate the effectiveness of this scheme by numerical experiments related with the solution of a non-local BVP of type \eqref{l1}-\eqref{l2}.
Finally, conclusions and future work are drawn in Section \ref{end}.  

\section{Lanczos Acceleration of Fourier Expansions}\label{acc}
Suppose we set a value for $w,$ 
since $q(\tau, w)$ is analytic over the unit interval, it can be expanded into Fourier series of $\tau$ and we get
\[
q(\tau, w) = {\hat c}_0 + \sum_{k =1}^{+\infty} \left[{\hat c}_k \cos(2 \pi k \tau) + {\hat s}_k \sin(2 \pi k \tau) \right];
\]
the coefficients that occur in this series (Fourier coefficients) are given by
\[
{\hat c}_0 = \int_0^1 q_\tau(w)  d \tau =1
\]
and through the real and imaginary parts of
\[
{\hat c}_k + \mc {\hat s}_k =  \int_0^1 q_\tau(w) e^{\mc 2 \pi k \tau} d \tau.
\]
From the evaluation of the latter integral it therefore follows that
\begin{equation}\label{expan}
 q(\tau,w) =1+ 2w \sum_{k =1}^{+\infty}  \left[w \cos(2 \pi k \tau) -k  \sin(2 \pi k \tau) \right] (w^2+k^2)^{-1}.
\end{equation}
Since $q(\tau,w)$ is non-periodic, the use of the Fourier series has a computational disadvantage, i.e. the series can converge very slowly.

In \cite[Lemma 1]{Boito}, the authors developed methods to accelerate the convergence of \eqref{expan} by using relations expressing Bernoulli polynomials by appropriate numerical series of sine and cosine. This led to the relationship given in \eqref{lexp}.

In this section, a derivation of the so-called Lanczos representation for the function $q(\tau, w)$ is described. This involves expressing  $q(\tau, w)$ in the form
\begin{equation} \label{polf}
    q(\tau, w) \equiv g(\tau) = h_{p-1} (\tau) + f_p(\tau)
\end{equation}
where $h_{p-1}(\tau)$ is a polynomial of degree $(p-1)$ and $f_p(\tau)$ has Fourier coefficients with a decay of order $k^{-p}$ as $k \rightarrow +\infty.$
Following the results reported in \cite[pp. 82, 83]{Lyn}, for any $p\geq 1$  we get   
\begin{eqnarray}
h_{p-1} (\tau) &:=& 1+\sum_{k=1}^{p-1} [g^{(k-1)}(1)-g^{(k-1)}(0)] \frac{B_{k}(\tau)}{k!}, \label{lanczos}\\
 f_p(\tau)&:=& 2 \sum_{k=1}^{+\infty} \left[ c_k \cos(2 \pi k \tau) +  s_k \sin(2 \pi k \tau) \right], \label{lanczos1}
\end{eqnarray}
where $B_k(\cdot)$ is the k-th Bernoulli polynomial and the coefficients $c_k$ and $s_k$ 
are such that 
\[
c_k + \mc s_k=\frac{(-1)^p}{(2\pi k \mc)^p }  \int_0^1 g^{(p)}(\tau)(e^{2 \pi k \mc \tau}-1) d \tau,
\]
cfr. \cite[Eqs. (1.14) and (1.20)]{Lyn}. By direct calculation it is easily found that
\begin{eqnarray} \label{cksk}
    c_k + \mc  s_k &\,=&\frac{(-1)^p}{(2\pi k \mc)^p } \left (\frac{w^{p+1}}{e^w-1} \frac{e^{w+2\pi k \mc}-1}{w+2\pi k \mc} -w^p \right) \nonumber\\
&\,=&\frac{(-1)^p}{(2\pi k \mc)^p } w^{p+1} \left (\frac{1}{w+2\pi k \mc} -\frac{1}{w} \right) \nonumber\\
&\,=&\frac{(-1)^p}{(2\pi k  \mc)^p } w^{p+1} \left (\frac{w-2\pi k \mc}{w^2+(2\pi k)^2} -\frac{1}{w} \right) \nonumber\\
&\,=&\frac{(-1)^{p+1}}{(2\pi k)^{p-1} \mc^p } w^p \frac{ 2\pi k + w \mc }{ w^2+(2\pi k)^2 }.
\end{eqnarray}
Consequently,
\begin{eqnarray} \label{ckskparidispari}
  c_k + \mc  s_k &\,=& \left\{\begin{array}{cc}
   \displaystyle{\frac{(-1)^{p/2+1}}{(2\pi k)^{p-2} } \frac{ w^p  }{ w^2+(2\pi k)^2 }  + \mc \frac{(-1)^{p/2+1}}{(2\pi k)^{p-1} } \frac{ w^{p+1}  }{ w^2+(2\pi k)^2 } }, &  \, p \mbox{ even}, \\
   & \\
 \displaystyle{\frac{(-1)^{(p+1)/2}}{(2\pi k)^{p-1} } \frac{ w^{p+1}  }{ w^2+(2\pi k)^2 }  + \mc \frac{(-1)^{(p+1)/2}}{(2\pi k)^{p-2} } \frac{ w^p  }{ w^2+(2\pi k)^2 } }, &  \, p \mbox{ odd}.
 \end{array} \right.
\end{eqnarray}
In addition, observing that
\begin{equation*}\label{one}
g^{(k-1)}(1)-g^{(k-1)}(0) =  w^{k},  
\end{equation*}
and  that $B_0(\tau)=1,$ we can rewrite (see \eqref{lanczos})
\begin{equation*} \label{pez1}
\begin{split}
  h_{p-1} (\tau) &\; = B_0(\tau)+ \sum_{k=1}^{p-1} B_k(\tau) \frac{w^k}{k!} \\
&\; = \sum_{k=0}^{p-1} B_k(\tau) \frac{w^k}{k!}.  
\end{split}
\end{equation*}
To establish whether the newly introduced Lanczos representation of $q(\tau,w)$ recovers the expression found in \cite{Boito} and reported in \eqref{lexp}, we set $p= 2(n+1)$ and $w = (2\pi z).$ Obviously, 
\begin{eqnarray} \label{hn}
     h_{2n+1} (\tau) = \sum_{k=0}^{2n+1} B_k(\tau) \frac{(2\pi z)^k}{k!}.
\end{eqnarray} 
Concerning $f_p(\tau),$ from \eqref{ckskparidispari} we have that 
 \begin{equation} \label{fn}
     f_{2n+2}(\tau) = 2 \sum_{k=1}^{+\infty} \left[c_k \cos(2 \pi k \tau) +  s_k \sin(2 \pi k \tau) \right],
 \end{equation}
with
\begin{equation}\label{three}
 c_k = \frac{(-1)^n}{ k^{2n} } \frac{ z^{2n+2}}{ z^2+ k^2}, \qquad  s_k = \frac{(-1)^n}{k^{2n+1} } \frac{ z^{2n+3} }{z^2+k^2}.
\end{equation}
Then, it is immediate to verify that the Lanczos representation 
\[
q(\tau, w) = h_{2n+1} (\tau) + f_{2n+2} (\tau)
\]
agrees with \eqref{lexp}. 
 
Such concurrence is interesting from both a theoretical and a practical point of view. Indeed, it makes  possible to adapt general results for the Lanczos representation of regular functions to the specific expansion \eqref{lexp} of $q(\tau, 2\pi z)$. These results include error estimates and the design of alternative acceleration schemes. The corresponding adaptations for $q(\tau, w)$ are the subject of the next sections.

\section{Error Estimate and Convergence}\label{err}
The connection between the function $q(\tau,w)$ and the Lanczos representations discussed in the previous section is now exploited to derive the error estimate and to perform a convergence analysis.

Denoting by 
\[
f_{p,N}(\tau) := 2 \sum_{k=1}^{N} \left[ c_k \cos(2 \pi k \tau) +  s_k \sin(2 \pi k \tau) \right]
\]
the truncation to the first $N$ terms of  $f_p(\tau)$ (see \eqref{lanczos1}), we may approximate 
\begin{equation} \label{approxq}
    q(\tau, w) \approx h_{p-1} (\tau) + f_{p,N}(\tau).
\end{equation}
Then,  we are able to provide an estimate for the error
\begin{eqnarray} \label{errgen}
     R_{p,N}(\tau, w)&:=&q(\tau, w)- [h_{p-1} (\tau) + f_{p,N}(\tau)] \nonumber\\
     &=& 2 \sum_{k=N+1}^{+\infty} \left[ c_k \cos(2 \pi k \tau) +  s_k \sin(2 \pi k \tau) \right].
\end{eqnarray}
\begin{theorem}\label{first}
For a given integer $p \ge 1$ the following estimate holds
\begin{equation}\label{limit}
  \lim_{N \rightarrow +\infty} N^{p-1/2} \| R_{p,N}(\tau,w) \|_2 =    |w|^p \frac{1 }{(2\pi)^{p}} \frac{{\sqrt{2}}}{\sqrt{{2p-1}}}, 
\end{equation}
where $\parallel \cdot \parallel_2$ is the $L^2$-norm.
\end{theorem}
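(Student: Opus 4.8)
The plan is to reduce the $L^2$-norm of the residual to an explicit numerical series by Parseval's identity, and then to pin down its exact asymptotic decay from the closed form of the Fourier coefficients recorded in \eqref{cksk}.

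First, I would rewrite \eqref{errgen} in complex-exponential form,
\[
R_{p,N}(\tau,w)=\sum_{k=N+1}^{+\infty}\left[(c_k-\mi s_k)\,e^{2\pi\mi k\tau}+(c_k+\mi s_k)\,e^{-2\pi\mi k\tau}\right],
\]
so that, by the orthonormality of $\{e^{2\pi\mi m\tau}\}_{m\in\mathbb Z}$ in $L^2(0,1)$, Parseval's identity gives immediately
\[
\|R_{p,N}(\tau,w)\|_2^2=\sum_{k=N+1}^{+\infty}\left(|c_k+\mi s_k|^2+|c_k-\mi s_k|^2\right),
\]
which is independent of $\tau$ --- the norm being taken over $(0,1)$ --- in agreement with the statement. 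Here $c_k+\mi s_k$ is given by \eqref{cksk}; the companion quantity $c_k-\mi s_k$ is obtained by the identical calculation with $k$ replaced by $-k$ in the defining integral, which yields $c_k-\mi s_k = w^p\bigl[(2\pi k\mi)^{p-1}(w-2\pi k\mi)\bigr]^{-1}$, and it reduces to $\overline{c_k+\mi s_k}$ when $w$ is real.

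Next, I would extract the leading-order behaviour of these coefficients. From \eqref{cksk},
\[
|c_k+\mi s_k|^2=\frac{|w|^{2p}}{(2\pi k)^{2p-2}}\cdot\frac{|2\pi k+w\mi|^{2}}{\bigl|w^{2}+(2\pi k)^{2}\bigr|^{2}},
\]
with a similar expression for $|c_k-\mi s_k|^2$, the hypothesis on $w$ keeping both denominators bounded away from $0$. Since $|2\pi k+w\mi|^{2}=(2\pi k)^{2}+|w|^{2}-4\pi k\,\mathrm{Im}(w)$ and $\bigl|w^{2}+(2\pi k)^{2}\bigr|^{2}=(2\pi k)^{4}+O(k^{2})$, a short expansion in powers of $1/k$ should give
\[
|c_k+\mi s_k|^2+|c_k-\mi s_k|^2=\frac{2\,|w|^{2p}}{(2\pi)^{2p}}\,\frac{1}{k^{2p}}+O\bigl(k^{-(2p+1)}\bigr),\qquad k\to+\infty.
\]
Then, combining this with the elementary tail estimate $\sum_{k=N+1}^{+\infty}k^{-2p}=\frac{1}{2p-1}\,N^{-(2p-1)}+O(N^{-2p})$ (comparison with $\int_N^{+\infty}x^{-2p}\,dx$, or one step of Euler--Maclaurin) and with the fact that the tail of the $O(k^{-(2p+1)})$ remainder is $O(N^{-2p})$, I obtain
\[
N^{2p-1}\,\|R_{p,N}(\tau,w)\|_2^2=\frac{2\,|w|^{2p}}{(2\pi)^{2p}(2p-1)}+O(N^{-1}),
\]
whence \eqref{limit} follows on letting $N\to+\infty$ and taking square roots.

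The hard part, I expect, will be the bookkeeping in the asymptotic expansion: one must verify that every subleading contribution --- the $|w|^{2}-4\pi k\,\mathrm{Im}(w)$ in the numerators and the $O(k^{2})$ correction in the denominators --- produces a remainder that is uniformly $O(k^{-(2p+1)})$, so that after scaling by $N^{2p-1}$ its tail vanishes in the limit; for complex $w$ one must also be careful that $c_k-\mi s_k$ is not merely the conjugate of $c_k+\mi s_k$. Apart from this the argument is only Parseval's identity together with an elementary integral comparison, so I do not anticipate further obstacles.
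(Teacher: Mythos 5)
Your proof is correct and follows essentially the same route as the paper's: Parseval's identity applied to the bilateral exponential form of the truncated tail, the closed form \eqref{cksk} for the coefficients, and the elementary tail estimate $\sum_{k>N}k^{-2p}\sim N^{-(2p-1)}/(2p-1)$. If anything, you are slightly more careful than the paper about complex $w$: the paper's intermediate modulus $\frac{(2\pi k)^2}{w^2[w^2+(2\pi k)^2]}$ should read $\frac{(2\pi k)^2}{|w|^2\,|w+2\pi k\mi|^2}$, though this does not affect the leading-order term or the stated limit.
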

\begin{proof}
Following \cite[Theorem 2.1]{Barkhudaryan} and using \eqref{cksk} we can write 
\[
R_{p,N}(\tau,w) = \sum_{|k| >N} F_{-k} e^{-\mc 2 \pi k \tau},
\]
with
\[
F_{-k}  =    c_k + \mc  s_k.
\]
Using the Parseval's identity, we get
\begin{eqnarray*}
\| R_{p,N}(\tau,w) \|^2_2  &=& \sum_{|k| >N} |F_{-k}|^2  \\
 &=& \frac{1}{(2 \pi)^{2p}} |w|^{2p+2}\sum_{|k| >N} \frac{1}{k^{2p}} \Bigg \vert  \frac{w-2\pi k \mc }{w^2+(2\pi k)^2} - \frac{1}{w }   \Bigg \vert^2 \\
&=& \frac{1}{(2 \pi)^{2p}} |w|^{2p+2}\sum_{|k| >N} \frac{1}{k^{2p}} \frac{(2\pi k)^2}{w^2 [w^2+(2\pi k)^2]} \\
&\approx& \frac{1}{(2\pi)^{2p}}  |w|^{2p}\sum_{|k| >N} \frac{1}{k^{2p}} \\
&\approx& \frac{1}{(2\pi)^{2p}} |w|^{2p}  \frac{ {2} N^{-2p+1}}{2p-1}. 
\end{eqnarray*}
This concludes the proof.  $\qedwhite$
\end{proof} 
\begin{remark}
    It is worth noting in \eqref{limit} the occurrence   of the term $|w|^p$ which  can be problematic for convergence in the $w-$domain.  This observation is confirmed by many experimental  results reported in \cite{GL} which indicate that the more is the size of $w$, the more is the number of terms of \eqref{lexp} needed to reach a prescribed accuracy.  
\end{remark}
When $p=2(n+1)$  and $w=(2 \pi z),$ \eqref{approxq} coincides with  \eqref{lexpN}. So, we can rewrite the approximation to $q(\tau,2\pi z)$ given in \eqref{lexpN} as
\begin{equation} \label{Bern}
  g_{n,N}(\tau,z) =  h_{2n+1} (\tau)+ f_{2n+2,N}(\tau).
\end{equation}
Using these notation,  we define the error as
\begin{align}\label{error}
& R_{n,N}(\tau, z):=q(\tau, 2 \pi z)-g_{n, N}(\tau, z) \nonumber\\
&=2 \sum_{k=N+1}^{+\infty} \left[\frac{ (-1)^n  z^{2n+2}}{ k^{2n} (z^2+ k^2)}  \cos(2 \pi k \tau)+ \frac{  (-1)^n  z^{2n+3} }{k^{2n+1} (z^2+k^2)} \sin(2 \pi k \tau)\right].
\end{align}
From Theorem \ref{first} it is immediate to deduce that 
\[
\lim_{N \rightarrow +\infty} N^{2n+3/2} \| R_{n,N}(\tau,z) \|_2 = |z|^{2n+2}    \frac{{\sqrt{2}}}{\sqrt{4n+3}}.
\]
In Table \ref{t1} we show a numerical illustration of this result with $n=1$. For a given $z\in \{1, 0.1, 10\}$ and $N\in \{512, 1024, 2048\}$ we compute the quantity
\begin{equation} \label{Dn}
 \Delta(N)= \frac{N^{7/2} }{|z|^4} \parallel R_{1,N}(\tau, z) \parallel_2,   
\end{equation}
where $\parallel R_{1,N}(\tau, z) \parallel_2$ is estimated using the \textsc{Mathematica} function {\tt NIntegrate} applied to the partial sum formed by the first $2048$ terms of the expansion \eqref{error}. Doubling the number of terms does not change the results.  The integrands  are  highly oscillatory and \textsc{Mathematica} returns partial errors of order $10^{-3}$. 
As can be seen, for each $N$ considered the computed  value $\Delta(N)$ is a good approximation of $\sqrt{2/7}\approx 0.5345,$ independently of $z.$ \\
\begin{table}
\centering
  \begin{tabular}{|c||c|c|c|}
\hhline{-||---}
{\backslashbox{$z$}{$N$}}
  &  512 & 1024& 2048\\
\hhline{=::===}
\multirow{1}{*}{1}
 &$0.5327$ & $0.5328$& $0.5319$\\
\hhline{-||---}
\multirow{1}{*}{0.1}
 &$0.5327$ & $0.5328$ & $0.5319$\\
\hhline{-||---}
\multirow{1}{*}{10}
 &$0.5327$ & $0.5328$ & $0.5319$\\
\hhline{-||---}
\end{tabular}
\caption{Values of the quantity $\Delta(N)$ given in \eqref{Dn}  with $R_{1,N}(\tau, z)$ truncated at the first $2048$ terms.}
\label{t1}
\end{table}

To investigate pointwise convergence in regions away from the discontinuities at the endpoints of the interval $[0, 1]$, we follow an approach that is based on the results of \cite{KW} and differs from the approach used in \cite{Boito}. This approach avoids the use of Bernoulli polynomials by employing rational  corrections of the error in order to accelerate its  convergence toward zero. 
More precisely, we consider the first term in \eqref{errgen} without considering a constant, namely (see \eqref{ckskparidispari})
\begin{eqnarray} \label{error1}
 R_{p,N}^{(1)}(\tau, w)&:=&
 \left\{\begin{array}{cc}
 \displaystyle \sum_{k=N+1}^{+\infty} 
\frac{1}{(2\pi k)^{p-2} } \frac{ w^p  }{ w^2+(2\pi k)^2 } \cos(2 \pi k \tau),  &  \quad p \mbox{ even}, \nonumber\\
   & \\
 \displaystyle \sum_{k=N+1}^{+\infty} {\frac{1}{(2\pi k)^{p-1} } \frac{ w^{p+1}  }{ w^2+(2\pi k)^2 }
\cos(2 \pi k \tau)}, &  \quad p \mbox{ odd},
 \end{array} \right.\\
&\equiv& \sum_{k=N+1}^{+\infty} \gamma_{k}^{(0)}{ \cos(2 \pi k \tau)}.
\end{eqnarray}
From the relation 
\[
2 \cos(2 \pi k \tau) \cos(2 \pi \tau) =\cos(2 \pi (k+1) \tau)+\cos(2 \pi (k-1) \tau), \qquad k\geq 1, 
\]
it follows that 
\begin{eqnarray*}
(2- 2\cos (2 \pi \tau) )R_{p,N}^{(1)}(\tau, w) &=& \displaystyle\sum_{k=N+1}^{+\infty} 2 \gamma_{k}^{(0)}  \cos(2 \pi k \tau) \\
&& - \displaystyle\sum_{k=N+1}^{+\infty} \gamma_{k}^{(0)} \left[\cos(2 \pi (k+1) \tau)+\cos(2 \pi (k-1) \tau)\right] \\
&=&\displaystyle\sum_{k=N+1}^{+\infty} 2 \gamma_{k}^{(0)}  \cos(2 \pi k \tau) \\
&& - \displaystyle\sum_{r=N+2}^{+\infty} \gamma_{r-1}^{(0)} \cos(2 \pi r \tau) - \displaystyle\sum_{s=N}^{+\infty} \gamma_{s+1}^{(0)} \cos(2 \pi s \tau).
\end{eqnarray*}
Therefore, we obtains that 
\begin{eqnarray} \label{relA}
    (2&-& 2\cos(2 \pi \tau) )R_{p,N}^{(1)}(\tau, w) =  \gamma_{N+1}^{(0)}\left[ 2 \cos(2 \pi (N+1) \tau) - \cos(2 \pi N \tau)\right] \nonumber\\
&& -\gamma_{N+2}^{(0)}  \cos(2 \pi (N+1) \tau) +\displaystyle\sum_{k=N+2}^{+\infty} \gamma_{k}^{(1)}\cos(2 \pi k \tau)  
\end{eqnarray}
with  
\begin{equation} \label{gammak1}
  \gamma_{k}^{(1)}=-\gamma_{k-1}^{(0)}+ 2 \gamma_{k}^{(0)}  -\gamma_{k+1}^{(0)}.  
\end{equation}

Notice that  $-\gamma_{k}^{(1)}$ is the classical approximation  of the second derivative of the function (see \eqref{error1} and \eqref{ckskparidispari})
\begin{eqnarray}\label{f1def}  
f_1(x) &\,=& \left\{\begin{array}{cc}
   \displaystyle{\frac{1}{(2\pi x)^{p-2} } \frac{ w^p  }{ w^2+(2\pi x)^2 }}, &  \quad p \mbox{ even}, \\
   & \\
 \displaystyle{\frac{1}{(2\pi x)^{p-1} } \frac{ w^{p+1}  }{ w^2+(2\pi x)^2 }}, &  \quad p \mbox{ odd},
 \end{array} \right.
\end{eqnarray}
over the grid $x_k=k$, $k\geq N+2$. Since 
\begin{eqnarray}  
f_1''(x) &\,=& \left\{\begin{array}{cc}
   \frac{\theta_1(p)w^px^{4-p}}{(w^2+(2\pi x)^2)^3} +  \frac{\theta_2(p)w^px^{2-p}}{(w^2+(2\pi x)^2)^2} +\frac{\theta_3(p)w^px^{-p}}{w^2+(2\pi x)^2} , &  \quad p \mbox{ even}, \\
   & \\
 \frac{\vartheta_1(p)w^{1+p}x^{3-p}}{(w^2+(2\pi x)^2)^3} +  \frac{\vartheta_2(p)w^{1+p}x^{1-p}}{(w^2+(2\pi x)^2)^2} +\frac{\vartheta_3(p)w^{1+p}x^{-1-p}}{w^2+(2\pi x)^2}, &  \quad p \mbox{ odd},
 \end{array} \right.
\end{eqnarray}
from Taylor expansion  we  deduce that 
\[
f_1''(k) + \gamma_{k}^{(1)} = -\frac{2 f_1^\mathrm{\romannumeral 4}(\xi_k)}{4!}, \qquad k-1\leq \xi_k\leq k+1, 
\]
which implies 
\begin{eqnarray}  \label{gammaak1stima}
\gamma_{k}^{(1)} =
\left\{\begin{array}{cc}
   {\mathcal{O}}\left(\displaystyle {k^{-p-2}}\right), &  \quad p \mbox{ even}, \\
   & \\
\mathcal{O}\left(\displaystyle k^{-p-3}\right), &  \quad p \mbox{ odd}.
 \end{array} \right. 
 \end{eqnarray}
Analogously,  for the second term in \eqref{errgen} we have
\begin{eqnarray} \label{error2}
R_{p,N}^{(2)}(\tau, w)&:=&  \left\{\begin{array}{cc}
 \displaystyle \sum_{k=N+1}^{+\infty} \frac{1}{(2\pi k)^{p-1}} \frac{w^{p+1} }{w^2+(2\pi k)^2} 
\sin(2 \pi k \tau),  &  \quad p \mbox{ even}, \nonumber\\
   & \\
 \displaystyle \sum_{k=N+1}^{+\infty} 
\frac{1}{(2\pi k)^{p-2}} \frac{w^p}{w^2+(2\pi k)^2}
\sin(2 \pi k \tau), &  \quad p \mbox{ odd},
 \end{array} \right.\\
&\equiv& \sum_{k=N+1}^{+\infty} \delta_{k}^{(0)}{ \sin(2 \pi k \tau)}.
\end{eqnarray}
Using the the formula 
\[
2 \sin(2 \pi k \tau) \cos (2 \pi\tau) =\sin(2 \pi (k+1) \tau)+\sin(2 \pi (k-1) \tau), \qquad k\geq 1, 
\]
we obtain that  
\begin{eqnarray}\label{relB}
    (2&-& 2\cos (2 \pi \tau) )R_{p,N}^{(2)}(\tau, w) =  \delta_{N+1}^{(0)}\left[ 2 \sin(2 \pi (N+1) \tau) - \sin(2 \pi N \tau)\right] \nonumber\\
    &-&\delta_{N+2}^{(0)}  \sin(2 \pi (N+1) \tau) +\displaystyle\sum_{k=N+2}^{+\infty} \delta_{k}^{(1)}\sin(2 \pi k \tau), 
\end{eqnarray}
with 
\begin{equation} \label{deltak1}
    \delta_{k}^{(1)}=-\delta_{k-1}^{(0)}+ 2 \delta_{k}^{(0)}  - \delta_{k+1}^{(0)}.
\end{equation}
Again,  we observe that  $-\delta_{k}^{(1)}$ is the classical approximation  of the second derivative of the function 
\begin{eqnarray}\label{f2def}  
f_2(x) &\,=& \left\{\begin{array}{cc}
  \displaystyle{\frac{1}{(2\pi x)^{p-1} } \frac{ w^{p+1}  }{ w^2+(2\pi x)^2 }},  &  \quad p \mbox{ even}, \\
   & \\
\displaystyle{\frac{1}{(2\pi x)^{p-2} } \frac{ w^p  }{ w^2+(2\pi x)^2 }},  &  \quad p \mbox{ odd},
 \end{array} \right.
\end{eqnarray}
over the grid $x_k=k$, $k\geq N+2$. 
Notice that \eqref{f1def} coincides with \eqref{f2def} up to a change of parity of $p$. Hence, from \eqref{gammaak1stima}
we find that 
\begin{eqnarray}  \label{deltak1stima}
\delta_{k}^{(1)} =
\left\{\begin{array}{cc}
   \mathcal O\left(\displaystyle {k^{-p-3}}\right), &  \quad p \mbox{ even}, \\
   & \\
\mathcal O\left(\displaystyle k^{-p-2}\right), &  \quad p \mbox{ odd}.
 \end{array} \right.
\end{eqnarray}
In this way, we arrive at the following result which describes the behaviour  of the error in the interior of the unit interval. 
\begin{theorem}\label{second} 
For any $\tau\in (0, 1),$ setting
\[
m = \left\{\begin{array}{ll}
    p, & \,\, p \mbox{ even}, \\
    p+1, & \, \,p \mbox{ odd},
\end{array}\right.
\]
we have 
    \[
    R_{p,N}(\tau, w)= \frac{(-1)^{\left \lceil{(p+1)/2}\right \rceil} }{ 1- \cos (2\pi \tau)} \Gamma_{p,N}(\tau,w) + \smallO \left(N^{-m}\right), \quad  N\rightarrow +\infty,
    \]
    where 
$\left \lceil{\cdot}\right \rceil$ denotes the ceiling function and
    \[
    \Gamma_{p,N}(\tau,w)=
    \gamma_{N+1}^{(0)}\left[ 2 \cos(2 \pi (N+1) \tau) - \cos(2 \pi N \tau)\right]  -\gamma_{N+2}^{(0)}  \cos(2 \pi (N+1) \tau).
    \]
\end{theorem}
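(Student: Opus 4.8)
The plan is to combine the exact Fourier tail \eqref{errgen} with the closed forms \eqref{ckskparidispari} to write $R_{p,N}(\tau,w)=2(-1)^{\lceil(p+1)/2\rceil}\bigl(R_{p,N}^{(1)}(\tau,w)+R_{p,N}^{(2)}(\tau,w)\bigr)$, and then to apply to the slower of the two series the $\sigma$-factor (Krylov-type) acceleration of \cite{KW} that has already been carried out in \eqref{relA} and \eqref{relB}. Multiplying by $2-2\cos(2\pi\tau)$ telescopes the index and trades the slowly-decaying tail for the explicit boundary expression plus a residual trigonometric series whose coefficients decay two orders faster; since $\tau\in(0,1)$ one has $1-\cos(2\pi\tau)>0$, so we may divide through, which both produces the factor $1/(1-\cos 2\pi\tau)$ in the statement and is the step that confines the result to the interior of $[0,1]$.

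For $p$ even the slow part is the cosine series $R_{p,N}^{(1)}$ (with $\gamma_k^{(0)}=\Theta(k^{-p})$), and \eqref{relA} expresses it as $\tfrac{1}{2(1-\cos 2\pi\tau)}$ times $\Gamma_{p,N}(\tau,w)$ plus the residual $\sum_{k\ge N+2}\gamma_k^{(1)}\cos(2\pi k\tau)$; by \eqref{gammaak1stima} the latter is $\mathcal O(N^{-p-1})$, absolutely summable. The complementary sine series $R_{p,N}^{(2)}$ has coefficients $\delta_k^{(0)}=\Theta(k^{-p-1})$, one order higher, so either Abel/Dirichlet summation against the (bounded, $\tau$-dependent) partial sums of $\sin(2\pi k\tau)$, or a second application of \eqref{relB} together with \eqref{deltak1stima}, gives $R_{p,N}^{(2)}=\mathcal O_\tau(N^{-p-1})$, where the implied constant depends on $\tau$. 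Substituting these two bounds and collecting yields
\[
R_{p,N}(\tau,w)=\frac{(-1)^{\lceil(p+1)/2\rceil}}{1-\cos 2\pi\tau}\,\Gamma_{p,N}(\tau,w)+\mathcal O_\tau(N^{-p-1}),
\]
and $\mathcal O(N^{-p-1})\subset\smallO(N^{-p})=\smallO(N^{-m})$, which is the assertion. The case $p$ odd is handled in the same way after exchanging the roles of the cosine and sine parts (now the sine part is the slower one), invoking \eqref{relB}, \eqref{deltak1stima} and, where the subdominant cosine part is not yet of the required order, a further acceleration via \eqref{relA}; the exponent $m$ and the constant $(-1)^{\lceil(p+1)/2\rceil}$ are read off from the parity bookkeeping in \eqref{ckskparidispari} and in \eqref{gammaak1stima} and \eqref{deltak1stima}.

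Most of the work — the algebraic splitting, the telescoping identities \eqref{relA}, \eqref{relB}, and the finite-difference/Taylor estimates \eqref{gammaak1stima}, \eqref{deltak1stima} — is already in place in Section~\ref{err}; the only genuinely new points are to verify that each leftover contribution is $\smallO(N^{-m})$ rather than merely $\mathcal O$, and to track that the constants produced by dividing by $1-\cos(2\pi\tau)$ and by the Dirichlet-kernel bounds on the sine/cosine partial sums depend only on $\tau$. I expect this last bookkeeping — pinning down the $\tau$-dependence so the error is genuinely $\smallO(N^{-m})$ for each fixed $\tau\in(0,1)$, which is precisely what makes the estimate pointwise and "away from the discontinuities" rather than uniform up to $\tau=0,1$ — to be the main, if modest, obstacle.
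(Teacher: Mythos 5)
Your decomposition and machinery are exactly those of the paper: split $R_{p,N}$ as in \eqref{twoerr} into the cosine and sine tails \eqref{error1}--\eqref{error2}, multiply by $2-2\cos(2\pi\tau)$ to telescope via \eqref{relA}--\eqref{relB}, bound the new coefficients by \eqref{gammaak1stima} and \eqref{deltak1stima}, and divide by $1-\cos(2\pi\tau)>0$. For $p$ even your argument is complete and correct, and in fact supplies the details that the paper compresses into ``the result follows immediately'': $\Gamma_{p,N}=\Theta(N^{-p})$ is the leading term, the leftover cosine series is $O(N^{-p-1})$ by absolute summation of $\gamma_k^{(1)}=O(k^{-p-2})$, and the entire sine contribution is $O_\tau(N^{-p-1})$ --- you are right that this last point genuinely requires either Abel summation or one telescoping pass, since crude absolute summation of $\delta_k^{(0)}=\Theta(k^{-p-1})$ only gives $O(N^{-p})$, which is not $\smallO(N^{-p})$.

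The gap is the odd case, which you defer to ``parity bookkeeping.'' Carried out, your own observation undermines the stated formula: for $p$ odd the sine tail is the slow one, $\delta_k^{(0)}=\Theta(k^{-p})$, so after telescoping via \eqref{relB} its boundary term is generically $\Theta_\tau(N^{-p})$. That term is neither equal to $\Gamma_{p,N}$ --- which for odd $p$ is built from $\gamma_k^{(0)}=\Theta(k^{-p-1})$ and is therefore itself only $O(N^{-p-1})$ --- nor absorbable into $\smallO(N^{-m})=\smallO(N^{-p-1})$. Exchanging the roles of sine and cosine, as you propose, would produce as the explicit leading correction the sine analogue of $\Gamma_{p,N}$ (built from $\delta^{(0)}_{N+1},\delta^{(0)}_{N+2}$) with remainder $\smallO(N^{-p})$, not the statement as written. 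In fairness, the paper's own proof is a one-line appeal to \eqref{relA}, \eqref{relB}, \eqref{gammaak1stima}, \eqref{deltak1stima} that does not address this either, and the theorem is only ever invoked for $p=2n+2$ even, where both your proof and the paper's are sound; but as a proof of the claim for all $p\ge1$, the odd branch is not established, and the bookkeeping you postpone is exactly where the argument fails to deliver the displayed identity.
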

\begin{proof}
Using \eqref{error1} and \eqref{error2}, the error in \eqref{errgen} can be written as
\begin{equation} \label{twoerr}
    R_{p,N}(\tau, w) = (-1)^{\left \lceil{(p+1)/2}\right \rceil} 2 [R^{(1)}_{p,N}(\tau, w)+R^{(2)}_{p,N}(\tau, w)].
\end{equation}
Then, taking into account the relations given in \eqref{relA} and \eqref{relB},  and the estimates \eqref{gammaak1stima}, \eqref{deltak1stima} the result follows immediately.  $\qedwhite$
\end{proof} 

\section{Acceleration of Convergence}\label{accel}
The results reported in Theorem \ref{first} and  Theorem \ref{second}   illustrate  theoretically the acceleration  capabilities  of Lanczos approximation.  
However, the numerical results shown in  \cite{Boito3}  clearly indicate that the computation of the polynomial term in \eqref{lexp}  can be prone to numerical instabilities.  To circumvent this issue, in this section we investigate the design of  different acceleration techniques for evaluating $q(\tau, w)$.  Our starting point is the observation that the strategy devised  in the previous section for  the error analysis   of the Lanczos representation can be  applied from scratch
to the expansion with $p=2$ or very small values of $p$. \\ 
Recalling that (see \eqref{error1} and \eqref{error2})
\[
\gamma_k^{(0)}=
\left\{\begin{array}{cc}
 \displaystyle
\frac{1}{(2\pi k)^{p-2}} \frac{ w^p  }{ w^2+(2\pi k)^2 },  &  \, p \mbox{ even}, \\
   & \\
 \displaystyle  {\frac{1}{(2\pi k)^{p-1} } \frac{ w^{p+1}  }{ w^2+(2\pi k)^2 }}, &  \, p \mbox{ odd},
 \end{array} \right.
 \]
 \[
 \delta_k^{(0)}= 
 \left\{\begin{array}{cc}
 \displaystyle
\frac{1}{(2\pi k)^{p-1}} \frac{w^{p+1} }{w^2+(2\pi k)^2},  &  \, p \mbox{ even}, \\
   & \\
\displaystyle \frac{1}{(2\pi k)^{p-2}} \frac{w^p}{w^2+(2\pi k)^2}, &  \, p \mbox{ odd},
 \end{array} \right.
\]
and denoting by 
\[
\Gamma_1^{(1)}:= \frac{\gamma_{N+1}^{(0)}\left[ 2 \cos(2 \pi (N+1) \tau) - \cos(2 \pi (N) \tau)\right] -\gamma_{N+2}^{(0)} \cos((N+1)\tau)}{2-2 \cos(2\pi \tau)}
\]
and 
\[
\Delta_1^{(1)}:=  \frac{\delta_{N+1}^{(0)}\left[ 2 \sin(2 \pi (N+1) \tau) - \sin(2 \pi (N) \tau)\right]  -\delta_{N+2}^{(0)} \sin((N+1)\tau)}{2-2 \cos(2\pi \tau)},
\]
from  \eqref{relA} and \eqref{relB} we find that  for any $\tau\in(0, 1)$ the error can be expressed as (see \eqref{twoerr})
\begin{eqnarray*}\label{res1}
R_{p,N}(\tau, w) &=&   \frac{(-1)^{\left \lceil{(p+1)/2}\right \rceil} 2 }{2-2 \cos(2\pi \tau)} 
 [(2-2 \cos(2\pi \tau)) R^{(1)}_{p,N}(\tau, w)+\\
 &+&(2-2 \cos(2\pi \tau))R^{(2)}_{p,N}(\tau, w)]\\
&=&    (-1)^{\left \lceil{(p+1)/2}\right \rceil}  2 \left(  \Gamma_1^{(1)} + \Delta_1^{(1)} \right)+\\
&+& \frac{(-1)^{\left \lceil{(p+1)/2}\right \rceil}  2 }{2- 2\cos(2\pi \tau)} \left(
\displaystyle\sum_{k=N+2}^{+\infty}\gamma_{k}^{(1)}\cos(2\pi k \tau) +\displaystyle\sum_{k=N+2}^{+\infty}\delta_{k}^{(1)}\sin(2\pi k \tau)\right),
\end{eqnarray*}
with
\[
\gamma_{k}^{(j)}=-\gamma_{k-1}^{(j-1)}+ 2 \gamma_{k}^{(j-1)} - \gamma_{k+1}^{(j-1)}, \qquad \quad
\delta_{k}^{(j)}=-\delta_{k-1}^{(j-1)}+ 2 \delta_{k}^{(j-1)} -  \delta_{k+1}^{(j-1)}, \qquad j \ge 1.
\]
The process can be iterated in order to accelerate the convergence of  the two series  in the left hand side  of this relation by the multiplication by the factor $2- 2\cos(2\pi \tau)$.
In this way, 
by setting for an integer $\ell\ge 1$,  
\[
\Gamma_1^{(\ell)}= \sum_{j=1}^\ell \frac{\gamma_{N+j}^{(j-1)}\left[ 2 \cos(2 \pi (N+j) \tau) - \cos(2 \pi (N+j-1) \tau)\right] -\gamma_{N+j+1}^{(j-1)} \cos((N+j)\tau)}{(2-2 \cos (2\pi \tau))^j}
\]
and 
\[
\Delta_1^{(\ell)}= \sum_{j=1}^\ell \frac{\delta_{N+j}^{(j-1)}\left[ 2 \sin(2 \pi (N+j) \tau) - \sin(2 \pi (N+j-1) \tau)\right]  -\delta_{N+j+1}^{(j-1)} \sin((N+j)\tau)}{(2-2 \cos (2\pi \tau))^j},
\]
we find that
\[
R_{p,N}(t, w)= (-1)^{\left \lceil{(p+1)/2}\right \rceil} 2  \left( \Gamma_1^{(\ell)} + \Delta_1^{(\ell)} \right) +S_{p,N, \ell}(\tau,w),\qquad \ell \ge 1,
\]
or, equivalently,
\begin{eqnarray}
     q(\tau, w) &=& [h_{p-1} (\tau) + f_{p,N}(\tau)]  + (-1)^{\left \lceil{(p+1)/2}\right \rceil} 2  \left( \Gamma_1^{(\ell)} + \Delta_1^{(\ell)} \right) + S_{p,N, \ell}(\tau, w) \nonumber\\
     &:=& \mathcal{G}_{p, N,\ell}(\tau, w) + S_{p,N, \ell}(\tau, w), \qquad \ell \ge 1. \label{accapp}
\end{eqnarray}
\begin{remark}
At $\ell=0$ the approximation of $q(\tau,w)$ reduces to the approximation already given in \eqref{errgen} without acceleration, i.e.
 \[
 \mathcal{G}_{p, N,0}(\tau, w)= h_{p-1} (\tau) + f_{p,N}(\tau).
 \]
\end{remark}
\begin{remark}\label{comput}
The  approximation in  \eqref{accapp}  of  $q(\tau, w)$ requires the incorporation of the corrective term $(-1)^{\left \lceil{(p+1)/2}\right \rceil} 2  \left( \Gamma_1^{(\ell)} + \Delta_1^{(\ell)} \right)$. The evaluation of this term boils down to the calculation of quantities
 \begin{equation} \label{quantitagamma}
    \gamma_{N+1}^{(0)}, \gamma_{N+2}^{(0)}, \gamma_{N+2}^{(1)}, \gamma_{N+3}^{(1)}, \ldots, \gamma_{N+\ell}^{(\ell-1)}, \gamma_{N+\ell+1}^{(\ell-1)},  
 \end{equation}
 and 
  \begin{equation*} \label{quantitadelta}
 \delta_{N+1}^{(0)}, \delta_{N+2}^{(0)}, \delta_{N+2}^{(1)}, \delta_{N+3}^{(1)}, \ldots, \delta_{N+\ell}^{(\ell-1)}, \delta_{N+\ell+1}^{(\ell-1)}.
  \end{equation*}
 All these quantities can be obtained from 
 $\gamma_{N+j}^{(0)}$ and $\delta_{N+j}^{(0)}$, $0\leq j\leq 2\ell$, through recurrence relations. Furthermore, $\delta_{N+j}^{(0)}$ is derived from $\gamma_{N+j}^{(0)}$ at the cost of additional matrix-by-vector multiplication. The next graph illustrates the computation of the required quantities in \eqref{quantitagamma} when  $\ell=3$. 
\begin{center}
\vspace{4mm}
\begin{tikzpicture}[node distance= 10mm and 8mm, thick, main/.style = {draw, circle}] 
\node[main] (1) {$\gamma_{N+1}^{(0)}$}; 
\node[main] (2) [right=of 1] {$\gamma_{N+2}^{(0)}$}; 
\node[main] (3) [right=of 2] {$\gamma_{N+3}^{(0)}$};
\node[main] (4) [right=of 3] {$\gamma_{N+4}^{(0)}$};
\node[main] (5) [right=of 4] {$\gamma_{N+5}^{(0)}$};
\node[main] (6) [right=of 5] {$\gamma_{N+6}^{(0)}$};
\node[main] (7) [above=of 2] {$\gamma_{N+2}^{(1)}$}; 
\node[main] (8) [above=of 3] {$\gamma_{N+3}^{(1)}$}; 
\node[main] (9) [above=of 4] {$\gamma_{N+4}^{(1)}$}; 
\node[main] (10) [above=of 5] {$\gamma_{N+5}^{(1)}$};
\node[main] (11) [above=of 8] {$\gamma_{N+3}^{(2)}$};
\node[main] (12) [above=of 9] {$\gamma_{N+4}^{(2)}$};
\draw[->] (1) -- (7); 
\draw[->] (2) -- (7); 
\draw[->] (3) -- (7); 
\draw[->] (2) -- (8); 
\draw[->] (3) -- (8); 
\draw[->] (4) -- (8); 
\draw[->] (3) -- (9); 
\draw[->] (4) -- (9); 
\draw[->] (5) -- (9); 
\draw[->] (4) -- (10); 
\draw[->] (5) -- (10); 
\draw[->] (6) -- (10); 
\draw[->] (7) -- (11); 
\draw[->] (8) -- (11); 
\draw[->] (9) -- (11); 
\draw[->] (8) -- (12); 
\draw[->] (9) -- (12); 
\draw[->] (10) -- (12);
\end{tikzpicture}
\vspace{2mm}
\end{center}
Therefore, considering that the evaluation of 
$\mathcal{G}_{p,N,0}(\tau,A)\B f$ essentially requires the solution of $N$ shifted  linear systems of the form $(A^2 + k^2 I) \B x = \B b$ while the computation  of $\mathcal{G}_{p,N,\ell}(\tau,A)\B f$ requires solving $2\ell $ additional systems, we conclude that for small values of $\ell$  both approximations are comparable in terms of computational cost.  Furthermore, it is worthy noticing that the  quantities \eqref{quantitagamma} can be computed once and then reused for evaluating $q(\tau, A)$ for any $\tau$.
\end{remark}

The error estimates for the residual $S_{p,N, \ell}(\tau, w)$ can be derived using the same arguments exploited in the proofs of  Theorem \ref{first}  and Theorem \ref{second}. A global estimate of $\parallel S_{p,N, \ell}(\tau, w) \parallel_2$ can be obtained by introducing the penalty factors $\theta_j$, $0<\theta_j< 1$, as pursued in \cite{Pogh,Pogh1}. In particular, the error analysis is extended by replacing the term $2-2 \cos (2\pi \tau)$ with the modified form $2-2 \theta_j \cos (2\pi \tau)$ for appropriate choices of such penalty factors. The overall process behaves similarly with the only modification being the use of weighted  finite difference approximations for the second derivatives.  

We have performed numerical experiments to test the effectiveness of the proposed acceleration technique. In Figure \ref{f1}  we illustrate this behavior by drawing the relative error function
\[
err_{p,N, \ell}(\tau,w)=\frac{|q(\tau,w)-\mathcal{G}_{p,N, \ell}(\tau, w)|} {|q(\tau,w)|}, \quad -10\leq w\leq 0,
\]
for $\tau \in \{2^{-3}, 2^{-7}\},$ by setting $N=100,$ and for different values of $p$ and $\ell$. 
Our experiments  indicate that the method performs quite well for $\tau$ away from the discontinuities  and its effectiveness decreases as  the value of $\tau$ approaches $0$.   
\begin{figure}
  \centering
\subfloat[$\ell=0$]{\includegraphics[width=0.5\textwidth]{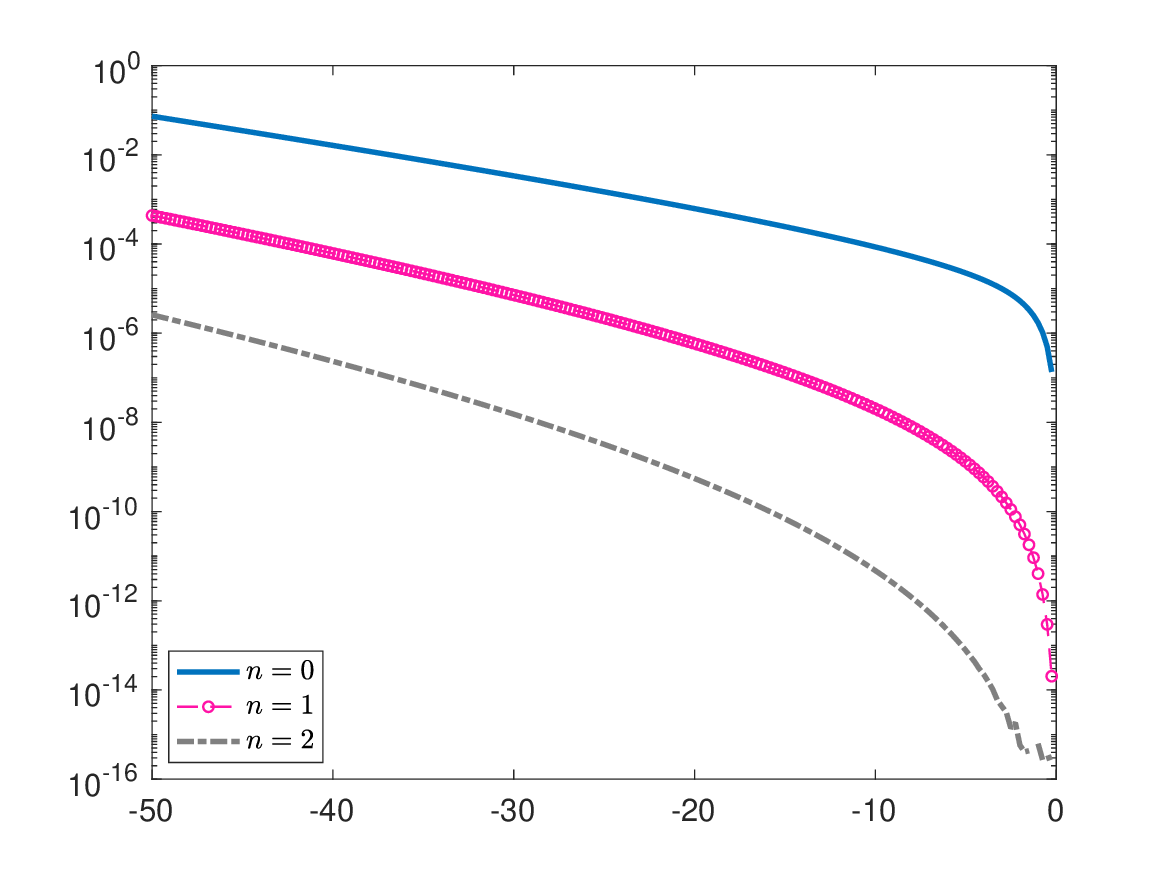}}
\hfill
\subfloat[$p=2$]{\includegraphics[width=0.5\textwidth]{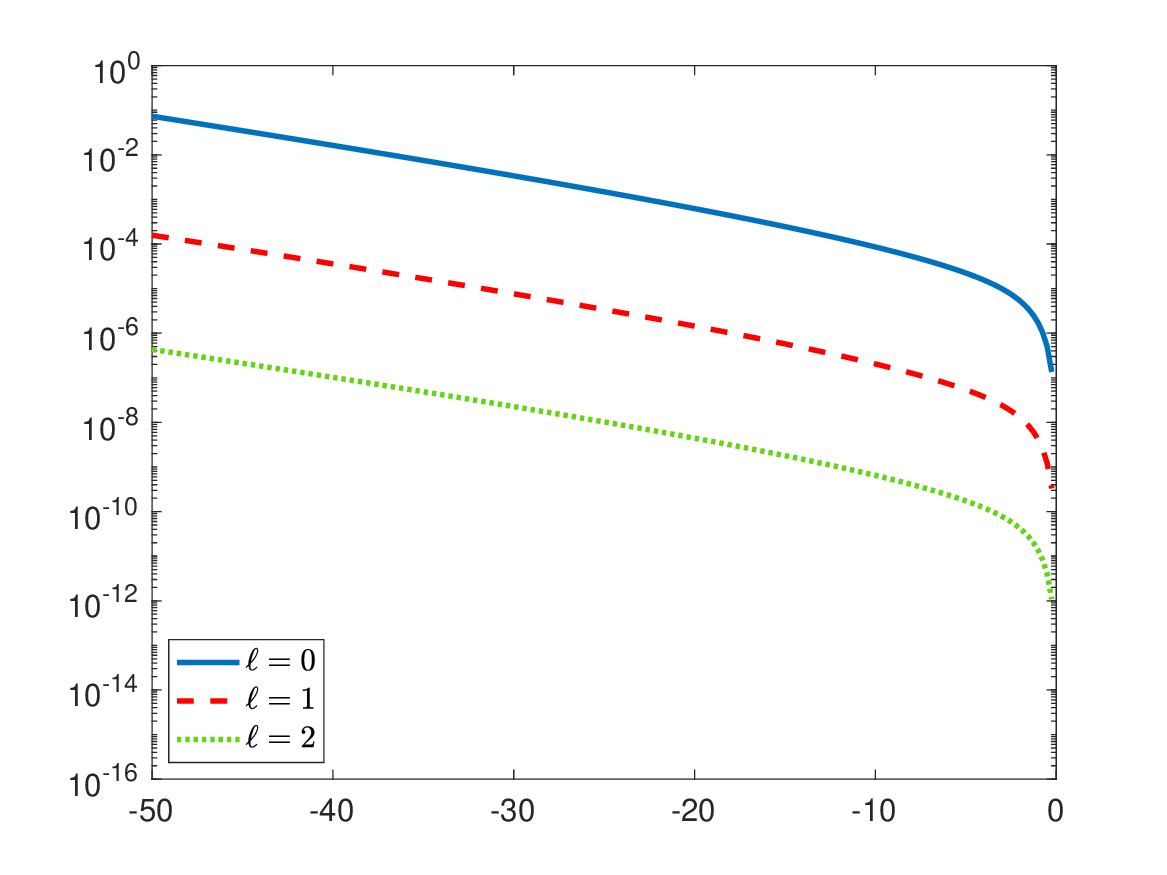}}
\hfill
\subfloat[$\ell=0$]{\includegraphics[width=0.5\textwidth]{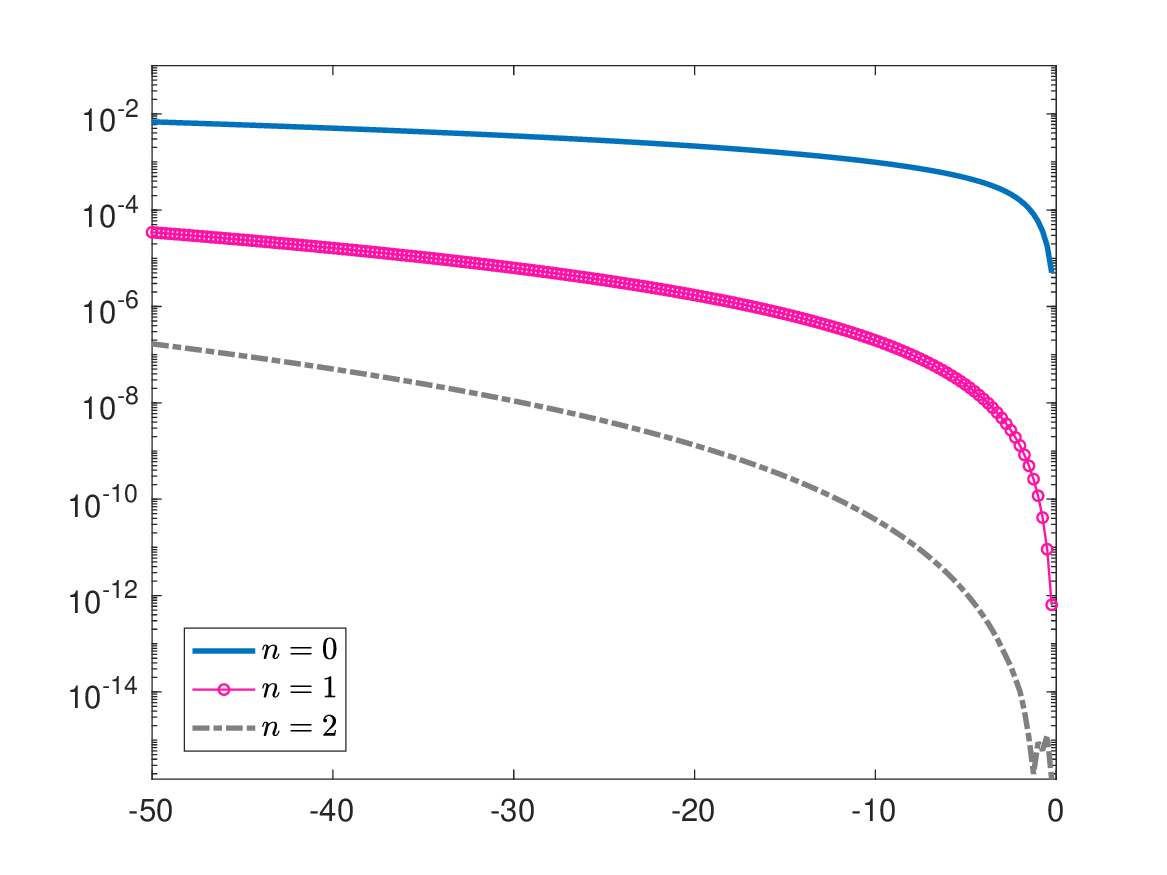}}
\hfill
\subfloat[$p=2$]{\includegraphics[width=0.5\textwidth]{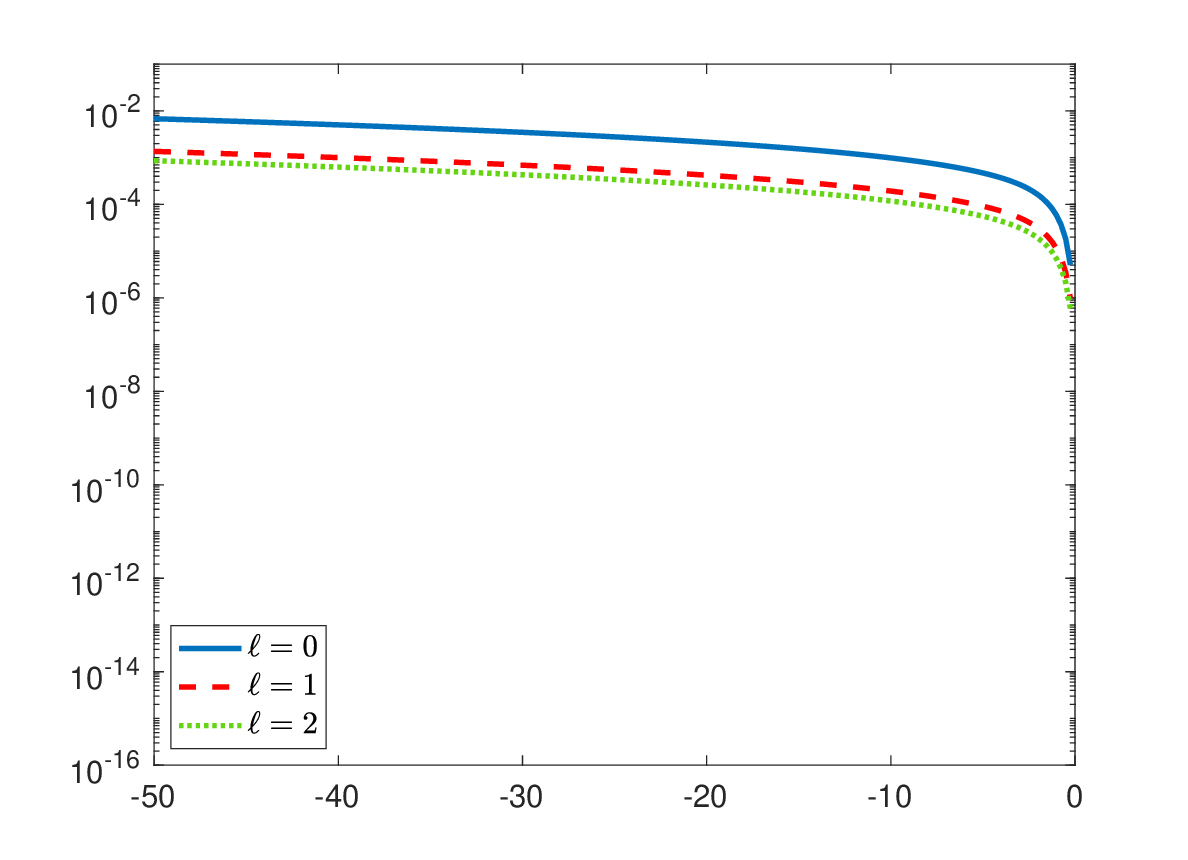}}
  \caption{Plots of the relative error $err_{p,N, \ell}(\tau,w)$  with $N=100$ and $p=2n+2.$ \\In (a)-(b): $\tau = 2^{-3}.$ In (c)-(d): $\tau = 2^{-7}.$}
\label{f1}
\end{figure}
If $\tau=0,$ to get reliable algorithm for approximating
$q(0,w)$ we must combine our approach 
with a suitable numerical method for evaluating the exponential function. Indeed, we can rely on the identity
\begin{equation}\label{formula}
q(0,w) 
= q(1-\alpha,w) e^{w/\alpha}-w, \qquad \alpha \in (0,1).
\end{equation}
The resulting approach is considered for $\alpha=1/8$ in Figure \ref{f2} where the exponential function is computed using  the \textsc{Matlab} built-in  function \texttt{exp}.
\begin{figure}
  \subfloat[$\ell = 0$]{\includegraphics[width=0.5\textwidth]{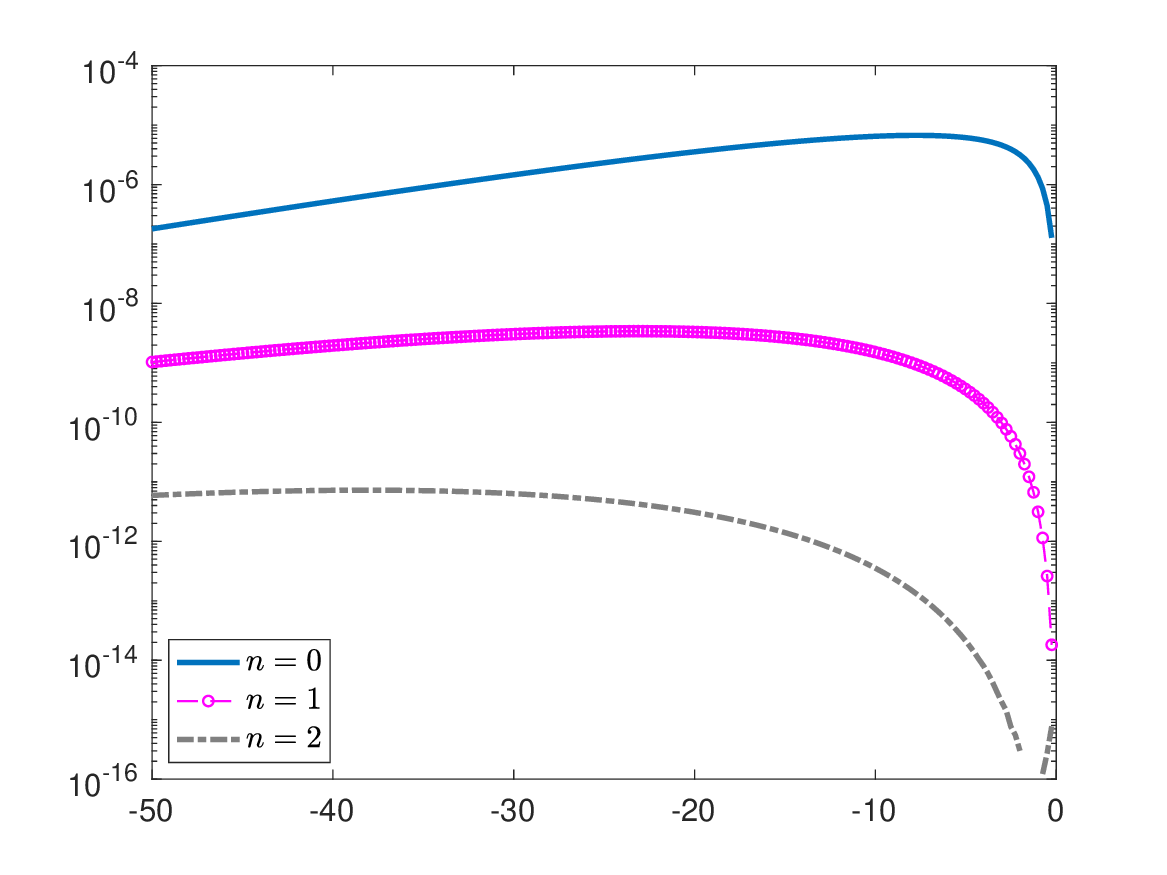}}
   \hfill
   \subfloat[$p = 2$]{\includegraphics[width=0.5\textwidth]{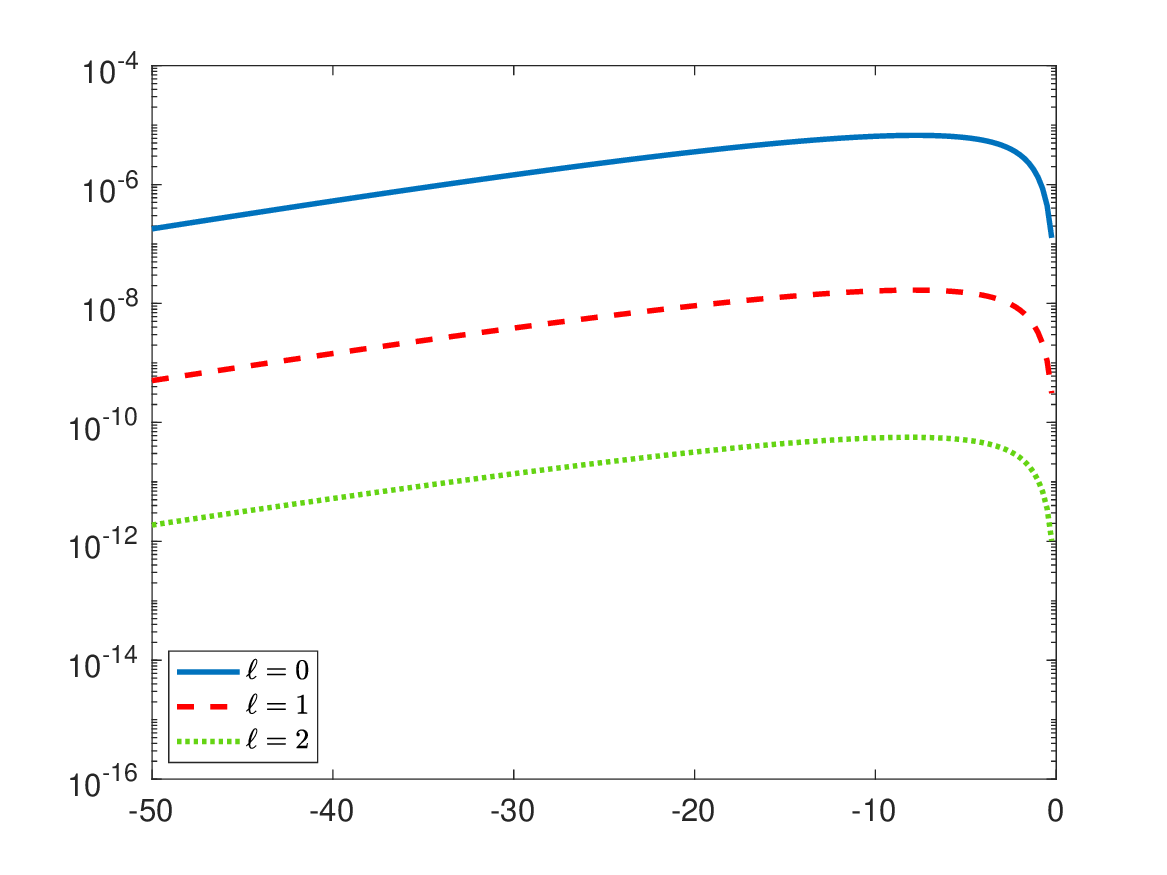}}
   \caption{Plots of $err_{p,N,\ell}(\tau,w)$  at $\tau=0$ generated  by  using formula \eqref{formula} with $\alpha=1/8,$ $N=100$ and $p=2n+2.$}
\label{f2}
\end{figure}
Another popular method for calculating the exponential is the rational approximation. This tool can be suitable for parallel computations \cite{GALSA}. For further experiments, we used the rational Chebyshev approximation
$\mathcal{N}_r(\zeta)/\mathcal{D}_r(\zeta)$ from $e^{-\zeta}$ for $\zeta\in [0, +\infty)$, with $\mathcal{N}_r(\zeta)$ and $\mathcal{D}_r(\zeta)$ polynomials of degree $r$, where the coefficients of such polynomials up to degree $r=30$ are given in \cite{CRV}. 
In this way, we obtain an approximation of $q(0,w)$, denoted as $u_{n,N, \ell, r,\alpha}(w)$ and defined by
\[
 q(0,w) \approx u_{p,N, \ell, r,\alpha}(w):=
 \mathcal{G}_{p,N, \ell}(\alpha, w) \frac{\mathcal{N}_r(-w/\alpha)}{\mathcal{D}_r(-w/\alpha)}-w.
 \]
Considering that the rational Chebyshev approximation of $e^{-\zeta}$ has uniform absolute error estimates of the form
\[
\sup_{\zeta \geq 0} \Bigg|\frac{\mathcal{N}_r(\zeta)}{\mathcal{D}_r(\zeta)}-e^{-\zeta} \Bigg|\simeq 10^{-r},
\]
cfr. \cite{CRV}, 
in Figure \ref{f3} we plot the absolute error function 
\[
err_{p,N, \ell, r,\alpha}(w)=|q(0,w)-u_{p,N, \ell, r,\alpha}(w)|, \qquad -5000 \leq w\leq 0 ,
\]
with $N=100,$  $r= 13, \alpha= 1/8$ and for different values of the parameters $p$ and $\ell.$  
\begin{figure}
  \centering
  \subfloat[$(p,\ell)=(6,0)$]{\includegraphics[width=0.5\textwidth]{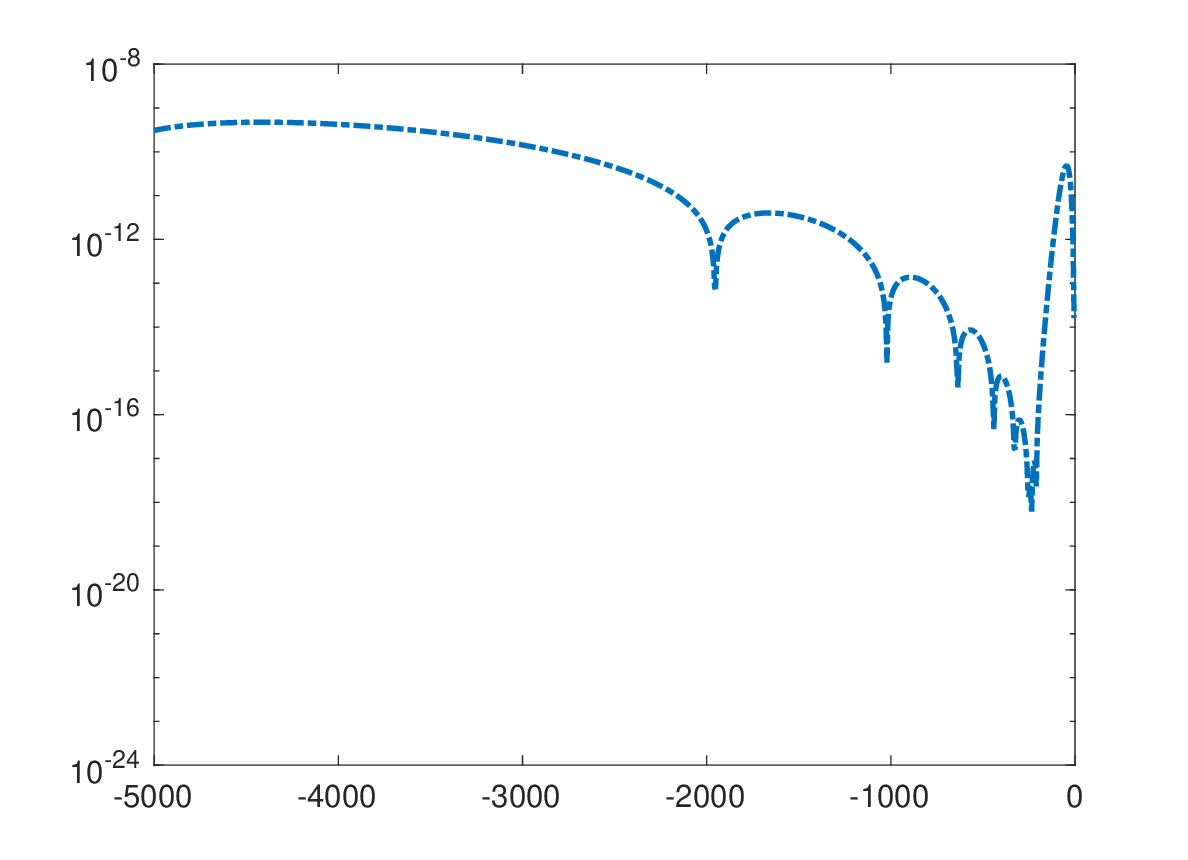}}
  \hfill
  \subfloat[$(p,\ell)=(2,3)$]{\includegraphics[width=0.5\textwidth]{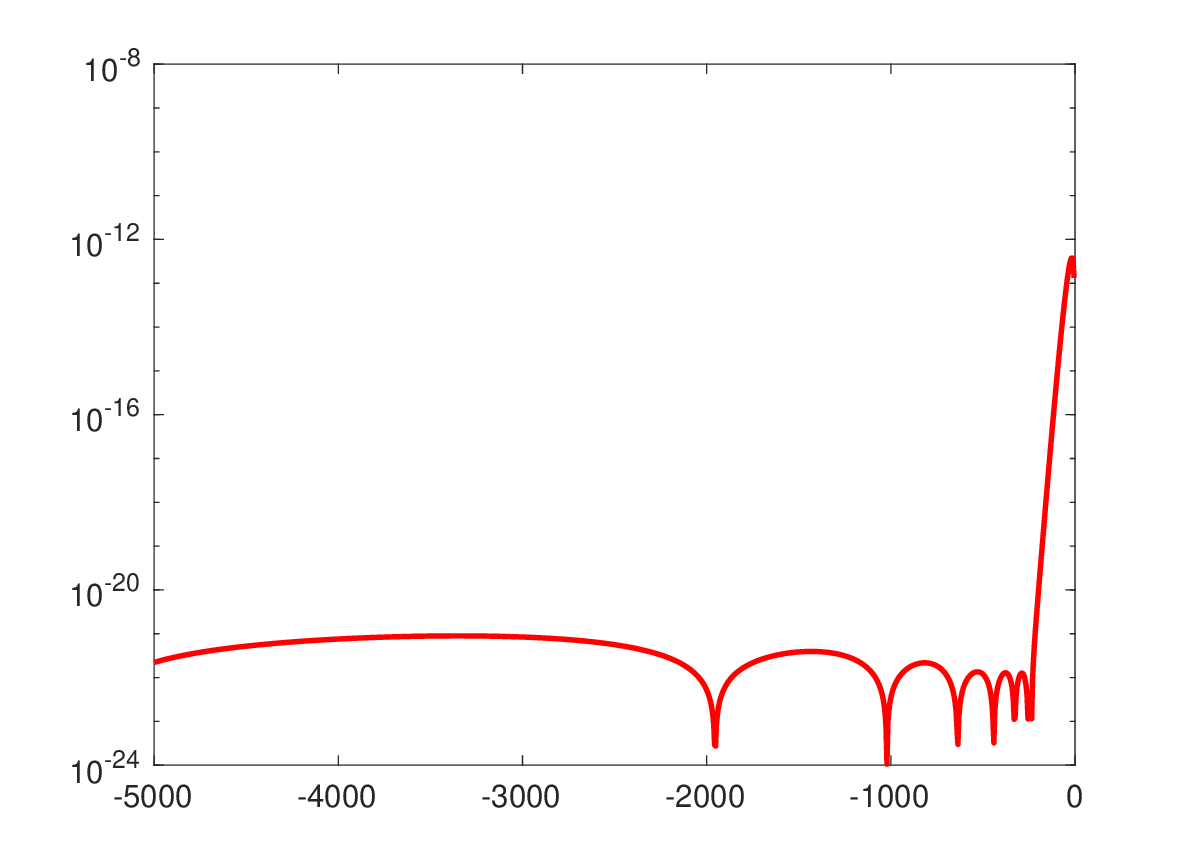}}
  \caption{Plots of  the absolute error $err_{p,N, \ell, r,\alpha}(w)$ with $N=100,$ $r=13$ and  $\alpha= 1/8.$}
\label{f3}
\end{figure}
As can be seen by comparing the pictures on the left with those on the right of the figure, the acceleration strategy proposed in this work ($\ell=3$ in this example) is more robust than the technique without acceleration ($\ell =0$) for large input values. To confirm this statement, in the next section we present the results of two numerical tests concerning the computation  of $q(\tau,A)\B f$ with $A\in \mathbb R^{s\times s}$ a matrix having a real spectrum located on the negative axis with many eigenvalues of large magnitude. Based on the above results, the use of the acceleration strategy can significantly influence the accuracy of the  computation of $q(\tau,w)$.

\section{Numerical Experiments}\label{numexp}
In this section, we present the results of some numerical experiments that we performed to compare the performance of our algorithm with the functional method described in \cite{Boito1,Boito} and the standard Arnoldi method.

The general problem to which we have referred is the differential problem of the form \eqref{l1}-\eqref{l2}. As stated in the introduction, the derivation of the numerical solution of such a BVP involves the computation of $q(\tau, A)\B f,$ where $\B f$ is the vector containing the evaluation of $f$ on the grid, cf. \eqref{l3}.

In our numerical experiments, we set $\B f={\tt ones}(s,1)$ and consider as exact solution (see \eqref{eq:init})
\[
\B z=({\tt expm}(A) -{\tt eye}(s))\textbackslash ({\tt expm}(\tau A) A \B f)
\]
derived using the MATLAB operator "backslash" and the built-in function ${\tt expm}$. 

After we have specified the approximate solution with $\B y \approx q(\tau, A)\B f$, we measure the error with
 \[
 \parallel \B y-\B z\parallel_\infty.
 \]
We now present four numerical tests. In the first two we compare our approach with the functional approximation of \cite{Boito1,Boito}, while in the last two we compare it with the Arnoldi iteration.

\subsection{Our approach compared to the Lanczos approximation} 
For the first two experiments, we focus on the non-local variant of the classical 1D heat equation, namely
\begin{equation}\label{lldiff1}
\left\{\begin{array}{ll}
u_t= u_{xx},&\quad 0<x<a,\; 0<t< t_f,\\
u(0,t)=u(a,t)=0, \end{array}
\right.
\end{equation}
with  the non-local condition 
\begin{equation}\label{lldiff2}
\frac1{t_f}\int_0^{t_f} u(x,t)\;dt=f(x).
\end{equation}

As is well known, the numerical solution of such a non-local BVP can be derived with a semidiscretization-in-space by applying the classical method of lines \cite{FW}. First, we discretize the one-dimensional operator
\[
\mathcal{A}= \displaystyle\frac{d^2}{dx^2}
\]
using central differences over the grid of points in the interval $[0,a]$ and obtain a matrix $A=(a_{i,j}) \in \mathbb R^{s\times s}.$ This then  reduces the problem \eqref{lldiff1}-\eqref{lldiff2} to a differential problem of the form \eqref{l1}-\eqref{l2}.

If we now use $\{x_i\}_{i=0}^{s+1},$ with $0=x_0<x_1< \dots <x_s<x_{s+1}$ $\leq$ $a,$ to denote the grid points on $[0,a],$ the matrix $A=(a_{i,j})$ has a tridiagonal structure with non-zero entries
\begin{equation}\label{mm}
\left\{
\begin{array}{lll}
a_{i,i}&=-2/((x_{i+1}-x_i)(x_i-x_{i-1})), &\ 1\leq i\leq s, \\
a_{i+1,i}&=\,\,\, 2/((x_{i+1}-x_{i})(x_{i+2}-x_{i})), &\ 1\leq i\leq s-1, \\
a_{i,i+1}&=\,\,\,  2/((x_{i+i}-x_{i})(x_{i+1}-x_{i-1})), &\ 1\leq i\leq s-1.
\end{array}
\right.
\end{equation}

When the grid is uniform with stepsize $h=a/(s+1),$ the matrix reduces to $A=(1/h^2) T$,   with $T$ the 1D Laplacian matrix. Note that small values of $h$ imply the presence of both large eigenvalues and a large norm of $A$. In light of the Theorem \ref{first}, the first is a critical fact for the convergence of the approximation \eqref{approxq}. The second, however, can deteriorate the accuracy of the acceleration strategy. 
To illustrate these effects, we performed numerical experiments in which we compared for $p=2n+2$ the approximation derived with \eqref{approxq} (for such $p$, \eqref{approxq} actually reduces to \eqref{Bern}) with that in \eqref{accapp}. For short, we denote by
\begin{itemize}
    \item[$\bullet$] {\bf{Lanc}}: $\parallel g_{n,N}(\tau,A)\B f-\B z\parallel_\infty$
    \item[$\bullet$]  {\bf{FastLanc}}: $\parallel \mathcal{G}_{p,N,\ell}(\tau,A)\B f-\B z\parallel_\infty$ 
\end{itemize}

For the tridiagonal matrix $A \in \mathbb R^{s\times s}$ given in \eqref{mm} with $s=512,$ our test suite consists of comparing the performances of these two approximations on two different grids. In particular, in $\mathbf{Test \, 1}$ we consider the uniform grid with $h=24/513$, while in $\mathbf{Test \, 2}$ we consider the not uniform grid with
 \begin{equation} \label{griglianoun}
 x_0=0, \quad x_1=0.01, \quad x_{i+1}=x_i+\sigma (x_i-x_{i-1}), \, \,i\geq 1, \, \sigma=1.005.
 \end{equation}
 In Table~\ref{t2} we give the errors of the two approximations on the uniform grid
 for $\tau=1/12, 1/6$ and $p=2$ (i.e. $n=0$), if the acceleration is taken into account. Analogous comparisons for the not uniform grid are shown in Table~\ref{t3}. These tables clearly show the robustness of the proposed acceleration scheme compared to the strategy based on using the Lanczos approximation. It should be emphasised that these results are obtained with essentially the same computational effort (see Remark~\ref{comput}).
\begin{table} 
\caption{$\mathbf{Test \, 1}$ - uniform grid}
\begin{center}{
\begin{tabular}{|c||c|c|c||c|c|c|}
\hhline{-||---||---}
 {\bf{Lanc}} & \multicolumn{3}{|c||}{$\tau=1/12$} & \multicolumn{3}{|c|}{$\tau=1/6$}  \\
\hhline{=::===::===}
\hhline{-||---||---}
{\backslashbox{$N$}{$n$}}
  &  2 & 3& 4 &  2 & 3& 4 \\
\hhline{=::===::===}
\multirow{1}{*}{50}
&$7.9e\!+05$ & $8.6e\!+08$ & $2.0e\!+14$ &$7.3e\!+05$ & $6.9e\!+08$ & $4.1e\!+12$ \\
\hhline{-||---||---}
\multirow{1}{*}{100}
 &$3.3e\!+04$ & $8.2e\!+06$ & $1.6e\!+12$&  $3.1e\!+03$ & $1.8e\!+06$ & $3.7e\!+12$ \\
\hhline{-||---||---}
\multirow{1}{*}{200}
 &$8.0e\!+02$ & $5.3e\!+05$ & $1.6e\!+12$ & $8.0e\!+02$ &$9.5e\!+05$ & $3.7e\!+12$ \\
 \hhline{-||---||---}
 \multicolumn{7}{c}{}  \\
  \multicolumn{7}{c}{}  \\
\hhline{-||---||---}
 {\bf{FastLanc}} & \multicolumn{3}{|c||}{$\tau=1/12$} & \multicolumn{3}{|c|}{$\tau=1/6$}  \\
\hhline{=::===::===}
{\backslashbox{$N$}{$\ell$}}
  &  2 & 3& 4 &  2 & 3& 4 \\
\hhline{=::===::===}
\multirow{1}{*}{50}
 &$1.3e\!-04$ & $7.1e\!-06$ & $4.9e\!-07$ &$7.2e\!-07$ & $6.7e\!-08$ & $1.3e\!-09$\\
\hhline{-||---||---}
\multirow{1}{*}{100}
 &$8.1e\!-06$ & $6.4e\!-08$ & $5.6e\!-10$ &$2.7e\!-07$ & $4.8e\!-11$ & $3.8e\!-12$\\
\hhline{-||---||---}
\multirow{1}{*}{200}
 &$1.8e\!-07$ & $6.9e\!-10$ & $3.8e\!-12$ &$4.8e\!-10$ & $6.0e\!-12$ & $3.8e\!-12$\\
\hhline{-||---||---}
\end{tabular}}
\label{t2}
\end{center}
\end{table}
\begin{table} 
\caption{$\mathbf{Test \, 2}$ - not uniform grid}
\begin{center}{
\begin{tabular}{|c||c|c|c||c|c|c|}
\hhline{-||---||---}
 {\bf{Lanc}} & \multicolumn{3}{|c||}{$\tau=1/12$} & \multicolumn{3}{|c|}{$\tau=1/6$}  \\
\hhline{=::===::===}
\hhline{-||---||---}
{\backslashbox{$N$}{$n$}}
  &  2 & 3& 4 &  2 & 3& 4 \\
\hhline{=::===::===}
\multirow{1}{*}{50}
&$3.4e+12$ & $1.7e+18$ & $7.2e+23$  &$3.6e+12$ & $1.5e+18$ & $1.8e+24$ \\
\hhline{-||---||---}
\multirow{1}{*}{100}
&$1.6e+11$ & $1.9e+16$ & $2.1e+23$ &$6.5e+09$ & $2.5e+15$ & $1.0e+24$\\
\hhline{-||---||---}
\multirow{1}{*}{200}
 &$2.6e+09$ & $7.7e+14$ & $2.1e+23$ &$3.8e+09$ & $1.7e+15$ & $1.9e+24$\\
 \hhline{-||---||---}
 \multicolumn{7}{c}{}  \\
  \multicolumn{7}{c}{}  \\
\hhline{-||---||---}
 {\bf{FastLanc}} & \multicolumn{3}{|c||}{$\tau=1/12$} & \multicolumn{3}{|c|}{$\tau=1/6$}  \\
\hhline{=::===::===}
{\backslashbox{$N$}{$\ell$}}
  &  2 & 3& 4 &  2 & 3& 4 \\
\hhline{=::===::===}
\multirow{1}{*}{50}
 &$2.8e\!-03$ & $1.5e\!-04$ & $1.0e\!-05$ &$1.5e\!-05$ & $1.4e\!-06$ & $2.7e\!-08$\\
\hhline{-||---||---}
\multirow{1}{*}{100}
 &$1.7e\!-04$ & $1.4e\!-06$ & $1.3e\!-08$&$5.9e\!-06$ & $1.0e\!-09$ & $8.5e\!-11$\\
\hhline{-||---||---}
\multirow{1}{*}{200}
   &$4.1e\!-06$ & $1.5e\!-08$ & $1.4e\!-10$&$4.8e\!-09$ & $1.3e\!-10$ & $8.5e\!-11$\\
\hhline{-||---||---}
\end{tabular}}
\label{t3}
\end{center}
\end{table}

\subsection{Our approach compared to the Arnoldi approximation} 
We now consider the problem \eqref{l1}, \eqref{l2}, which corresponds to two different matrices. In $\mathbf{Test \, 3}$, the matrix $A$ is the one associated with the not uniform grid given in \eqref{griglianoun}. This matrix has a wide spectrum. All its eigenvalues are negative and lie in the interval $[-3.7 \cdot 10^4, -1.7 \cdot 10^{-2}].$ In $\mathbf{Test \, 4}$ we consider the scaled-unitary matrix
 \[
A=10^{-8}\begin{bmatrix*}
 0& \ldots &\ldots& 0& 1 \\
 1 &0 &\ldots& \ldots & 0\\
0 &\ddots &\ddots & & \vdots\\
 \vdots&\ddots & \ddots & \ddots & \vdots\\
 0& \dots & 0& 1& 0
 \end{bmatrix*} := 10^{-8} C,
 \]
 where $C$ is the generator of the circulant matrix algebra. All its eigenvalues lie on a small circle centered at the origin of the complex plane and of radius $10^{-8}$.

Then, for $\tau=1/6,$ we approximate $q(\tau, A)\B f$ by
 \begin{itemize}
 \item[$\bullet$] $\mathcal{G}_{p,N,\ell }(\tau, A)\B f$;
 \item[$\bullet$] the Arnoldi iteration at step $j$:
 \[
\B y_j= V_j \left ({\tt expm}(H_j) -{\tt eye}(j) \right)\textbackslash \left({\tt expm}(\tau H_j) H_j \B e_1 \parallel \B f\parallel_2 \right).
\]
We recall that the Arnoldi method consists in projecting $A$ onto a Krylov subspace of small dimension. Starting from a unit norm vector, the Arnoldi method builds the matrices $V_j$ incrementally. The projected part $H_j=V_j^TAV_j$ of $A$ is then used to evaluate $q(\tau, H_j)$ and obtain an approximation of $q(\tau,A)\B f$ (for more details see e.g. \cite{Saad1} and the references given therein).
 \end{itemize}
 
In Figure~\ref{farnoldi} we show the convergence history and a measure for the loss of orthogonality of the Arnoldi method, applied to the matrices described above.
\begin{figure}
 \subfloat[$\mathbf{Test \, 3}$]{\includegraphics[width=0.5\textwidth]{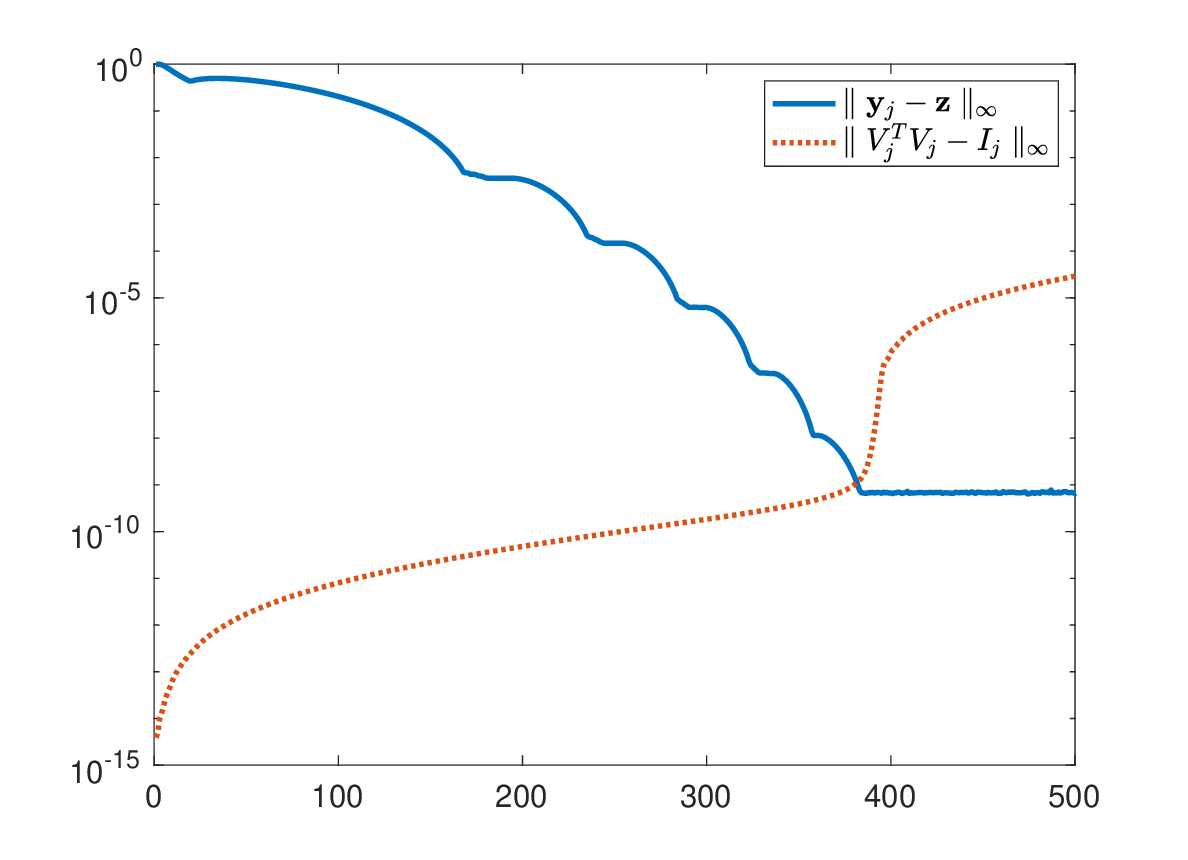}}
 \hfill
 \subfloat[$\mathbf{Test \, 4}$]{\includegraphics[width=0.5\textwidth]{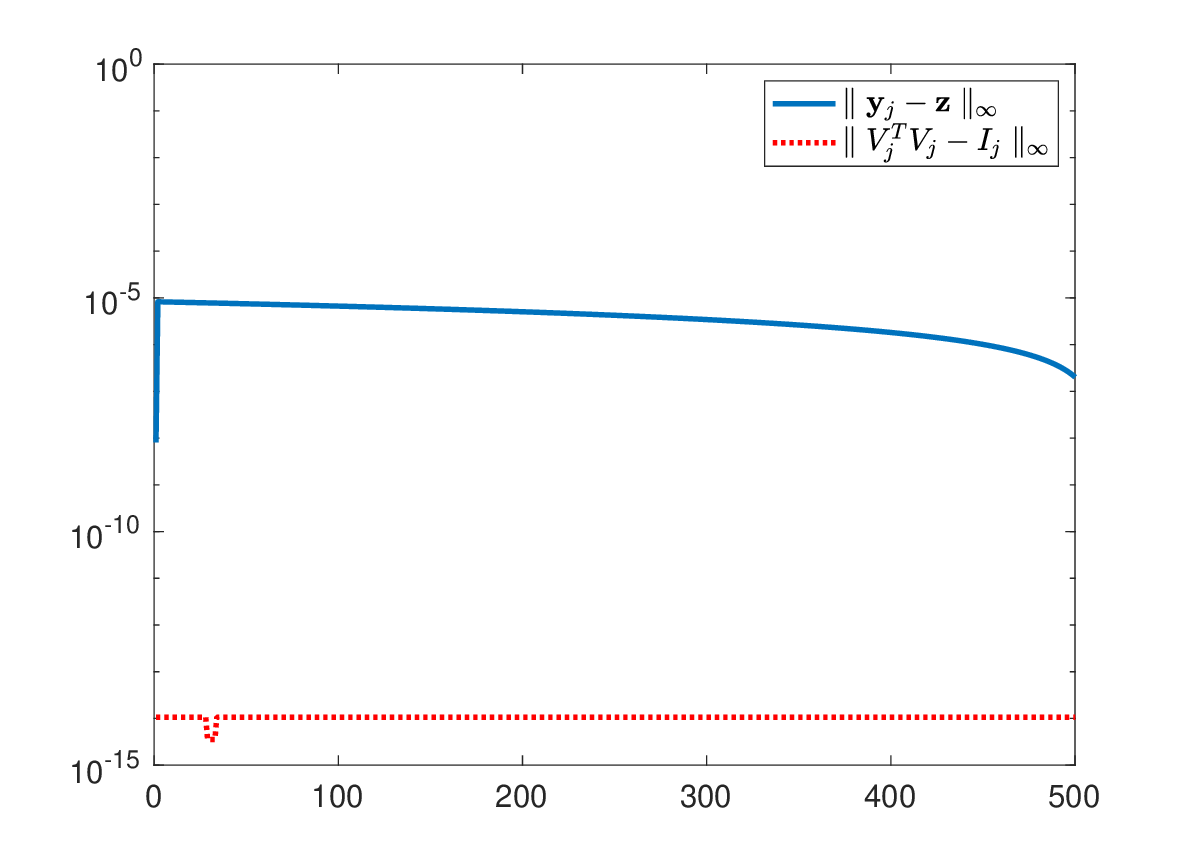}}
 \caption{Plots of the error and orthogonality (loss) in the Arnoldi method vs the iteration at step $j$.}
\label{farnoldi}
\end{figure}
The speed of convergence seems to be quite slow due to the wide range of the spectrum or due to conditioning problems related to intermediate calculations. In contrast, our method works quite well. Indeed, for $\mathbf{Test \, 3}$, 
 the computation of $\mathcal{G}_{2,50,5}(\tau,A)\B f$ takes $0.35$ seconds  and we find that
\[
\parallel \mathcal{G}_{2,50,5}(\tau,A)\B f-\B z\parallel_\infty=1.3 \cdot 10^{-10},
\]
while to achieve the same accuracy the evaluation of the vectors
$\B y_j$ 
is performed in 
about 8.5 seconds. \\ Although the matrix in $\mathbf{Test \, 4}$ can be problematic for the Arnoldi method, as we can see in the right picture of Figure~\ref{farnoldi}, since for our approach $q(\tau,w)$ has a removable singularity at $w=0$, our approach performs well and we actually have
\[
\parallel \mathcal{G}_{2,50,4}(\tau,A)\B f-\B z\parallel_\infty=2.7 \cdot 10^{-17}.
\]

\section{Conclusions and Future Work}\label{end}
In this paper we have derived the series expansion of the generating function of Bernoulli polynomials in the framework of the so-called Lanczos acceleration method of Fourier-trigonometric series of smooth non-periodic functions. This is a general method that involves the use of Bernoulli polynomials to accelerate convergence. Our derivation makes it possible to adapt the general error estimates provided for this method to the meromorphic function under consideration, namely the generating function of Bernoulli polynomials. Extensions of this method involving the use of rational functions are also investigated. In particular, we propose a cost-effective acceleration method based on the rational-trigonometric approximation of the residual of the Fourier series. Numerical experiments with finite difference discretizations of differential operators show that our proposed acceleration method outperforms the Lanczos 
approximation in terms of robustness and accuracy. Future work will focus on the development of an automatic procedure for the selection of the parameters necessary for the construction of the resulting approximations. In this respect, it would be useful to derive an error estimate for the residual $S_{p,N, \ell}(\tau, w)$.  Another interesting topic is the analysis of Fourier-Pad\'e approximation methods of the residual. Such methods can be very effective when combined with efficient scaling and squaring techniques. The development of these methods for the calculation of $q(\tau,w)$ is still an ongoing research project.
 \begin{acknowledgements}
The authors are members of GNCS-INDAM. Lidia Aceto is partially supported by the "INdAM - GNCS Project", codice CUP$\_$E53C23001670001. Luca Gemignani is  partially supported by  European Union - NextGenerationEU under the National Recovery and Resilience Plan (PNRR) - Mission 4 Education and research - Component 2 From research to business - Investment 1.1 Notice Prin 2022 - DD N. 104  2/2/2022, titled Low-rank Structures and Numerical Methods in Matrix and Tensor Computations and their Application, proposal code 20227PCCKZ – CUP I53D23002280006  and by the Spoke 1 ``FutureHPC \& BigData”  of the Italian Research Center on High-Performance Computing, Big Data and Quantum Computing (ICSC)  funded by MUR Missione 4 Componente 2 Investimento 1.4: Potenziamento strutture di ricerca e creazione di "campioni nazionali di R\&S (M4C2-19 )" - Next Generation EU (NGEU).
\end{acknowledgements}

\end{document}